\def\ni{\noindent}
\newtheorem{theorem}{Theorem}[section]
\newtheorem{lemma}[theorem]{Lemma}
\newtheorem{cor}[theorem]{Corollary}
\def\ni{\noindent}
\numberwithin{equation}{section}
\begin{document}
	
	\begin{center}
		{\Large \bf Reciprocal distance energy of complete\\[2mm] multipartite graphs}\\
		
		\vspace{10mm}
		
		{\large \bf Rakshith B. R.}$^{1,\ast}$,  {\large \bf B. J. Manjunatha}$^{2,3}$
		
		\vspace{9mm}

		\baselineskip=0.20in

		$^1${\it Department of Mathematics, Manipal Institute of Technology,\\ Manipal Academy of Higher Education,\\
			Manipal 576 104, India\/} \\
		{\rm E-mail:} {\tt ranmsc08@yahoo.co.in}\\[2mm]
		
		$^2${\it Research Scholar, Department of Mathematics\\ Vidyavardhaka College of Engineering\\
			Mysuru-570 002, India\/}\\[2mm] 
		
		$^3${\it Department of Mathematics, Sri Jayachamarajendra College of Engineering,\\ JSS Science and Technology University,\\
			Mysuru–570 006, India\/} \\
		{\rm E-mail:} {\tt manjubj@sjce.ac.in}
		
		\vspace{4mm}
		
	\end{center}
	
	\vspace{5mm}
	
	\baselineskip=0.20in
	
	\begin{abstract}
		\ni In this paper, first we compute the energy of a special partitioned matrix under some cases. As a consequence, we obtain the reciprocal distance energy of the complete multipartite graph and also we give various other energies of complete multipartite graphs.
		Next, we show that among all complete $k$-partite graphs on $n$ vertices, the complete split graph $CS(n,k-1)$ has minimum reciprocal distance energy and the reciprocal distance energy is maximum for the Turan graph $T(n,k)$. At last, it is shown that the reciprocal distance energy of the complete bipartite graph $K_{m,m}$ decreases under deletion of an edge if $2\le m\le 7$, whereas the reciprocal distance energy increases if $8\le m$. Also, we show that the reciprocal distance energy of the complete tripartite graph does not increase under edge deletion. 
		
	\end{abstract}
	\ni  \textbf{Keywords}: Complete multipartite graphs, reciprocal distance matrix, reciprocal distance energy. \\\\
	\ni \textbf{MSC (2010)}:  05C50.
	\baselineskip=0.30in
	\section{Introduction}
		Graphs considered in this paper are simple, connected and undirected. We denote the eigenvalues of a Hermitian matrix $H$ of order $n$ by $\lambda_{1}(H)\ge\lambda_{2}(H)\ge\ldots\ge\lambda_{n}(H)$. For a graph $G$ with vertex set $V(G)=\{v_{1},v_{2},\ldots,v_{n}\}$ and edge set $E(G)$, the distance between two distinct vertices $v_{i}$ and $v_{j}$ is denoted by $d(v_{i},v_{j})$ and is equal to the length of a shortest path connecting the vertices $v_{i}$ and $v_{j}$. The reciprocal distance  matrix of $G$, well-known as Harary matrix, is a symmetric matrix of order $n$, denoted by $RD(G)$ and its \textit{ij}th entry is equal to $\dfrac{1}{d(v_{i},v_{j})}$ if $i\neq j$, 0 otherwise. This matrix was introduced by Ivanciuc et al. for the design of topological indices in the year 1993, see \cite{ivanciu}. The well-known topological index derived from the reciprocal distance matrix is the Harary index, see \cite{hbook}. In \cite{ivan}, Ivanciuc et al. used the largest eigenvalue of the $RD(G)$ as one of the structural descriptor  to develop structure–property models for the normal boiling temperature, molar heat capacity, standard Gibbs energy of
		formation, vaporization enthalpy, refractive index, and density of 134 alkanes $C_6$–$C_{10}$. In \cite{dasm}, Das  obtained a lower and upper bound for the largest
		eigenvalue of the reciprocal distance matrix of a graph. Also, Nordhaus-Gaddum-type bounds for the largest eigenvalue of the reciprocal distance matrix were obtained therein. Graphs with maximum spectral radius of the reciprocal distance matrix in the classes of  graphs (bipartite graphs) with fixed matching number and  graphs with given number of cut edges were determined in \cite{amc}.  \\[2mm] Energy of a graph is a well-known graph invariant derived from the adjacency spectrum of a graph. This graph invariant nowadays known as ordinary energy of a graph was  introduced by Gutman in connection with H$\ddot{\text{u}}$ckel theory \cite{gut}. In analogous to the definition of graph energy, G$\ddot{\text{u}}$ng$\ddot{\text{o}}$r and Cevik in \cite{gungor} introduced Harary energy of a graph, also called reciprocal distance energy of a graph. It is denoted by $\mathcal{E}_{RD}(G)$ and is defined as $\mathcal{E}_{RD}(G)=\sum_{i=1}^{n}|\lambda_{i}(RD(G))|$. Several lower and upper bounds for the reciprocal distance energy in terms of graph parameters are given in \cite{rde1,rde2,rde3,akbar}. In \cite{eq1}, Ramane et al. constructed pairs of reciprocal distance equienergetic graphs by determining the reciprocal distance energy of  line graph of  certain regular graph,  and its complement. In \cite{eq2}, reciprocal distance equienergetic graphs are presented using the reciprocal distance spectrum of some generalized composition of graphs. Recent studies on the reciprocal distance matrix can be found in \cite{rds1,laa}.\\[2mm]
		 The energy of a complex matrix $M$ is the sum of all singular values of the matrix $M$ and is denoted by $\mathcal{E}(M)$. The definition of energy of a complex matrix was put forward by Nikiforov as an extension of the concept of graph energy, see \cite{nikiforov} for more details.
		We denote a complete $k$-partite graph by $K_{n_{1},n_{2},\ldots,n_{k}}$. The complete split graph $CS(n,k)$ is a graph on $n$ vertices obtained by taking one copy of the complete graph $K_{k}$ and joining each of its vertices with $n-k$ isolated vertices, i.e., $CS(n,k)\cong K_{n-k,1,1,\ldots,1}$. The Turan graph $T(n,k)$ is the complete $k$-partite graph on $n$ vertices given by $T(n,k)\cong K_{q+1,q+1,\ldots,q+1,q,q\ldots,q}$, where $n=kq+r$ and $r\ge0$. We denote the adjacency matrix of $G$ by $A(G)$.  For terminologies not defined in the paper, we refer to \cite{energybook,e2019}. \\[2mm]
		 In Section 2 of the paper, we compute the energy of a special partitioned matrix under some cases. As a consequence, we obtain the reciprocal distance energy of the complete multipartite graph and also we give various other energies of complete multipartite graphs.
		 In Section 3, we show that among all complete $k$-partite graph on $n$ vertices, the complete split graph $CS(n,k-1)$ has minimum reciprocal distance energy and the reciprocal distance energy is maximum for the Turan graph $T(n,k)$. In Section 4, it is shown that the reciprocal distance energy of the complete bipartite graph $K_{m,m}$ decreases under deletion of an edge if $2\le m\le 7$, whereas the reciprocal distance energy increases if $8\le m$. Also, we show that the reciprocal distance energy of the complete tripartite graph does not increase under edge deletion. 
	\section{Energy of a special partitioned matrix}
	Let $M_{1}, M_{2},\ldots, M_{k}$ ($k\ge2$) be real symmetric matrices of order $n_{1}, n_{2}, \ldots, n_{k}$ such that $M_{i}\textbf{1}_{n_{i}}=r_{i}\textbf{1}_{n_{i}}$, where $r_{i}\ge 0$, and $trace(M_{i})=0$. Denote by  $M=M[M_{1}, M_{2},\ldots,M_{k},a]$, a square matrix of order $n=n_{1}+n_{2}+\ldots+n_{k}$, defined as
	$$M=\left[\begin{array}{ccccc}
		M_{1} &aJ_{n_{1}\times n_{2}}&aJ_{n_{1}\times n_{3}}&\ldots& aJ_{n_{1}\times n_{k}}\\[2mm]	
		aJ_{n_{2}\times n_{1}}&M_{2}&aJ_{n_{2}\times n_{3}}&\ldots& aJ_{n_2\times n_{k}}\\[2mm]	
		\vdots&\vdots&\ddots&\vdots&\vdots\\[2mm]	
		aJ_{n_{k-1}\times n_{1}}&aJ_{n_{k-1}\times n_{2}}&\ldots& M_{k-1}&aJ_{n_{k-1}\times n_{k}}\\[2mm]	
		aJ_{n_{k}\times n_{1}}&aJ_{n_{k}\times n_{2}}&\ldots&aJ_{n_{k}\times n_{k-1}}& M_{k}\\
	\end{array}\right],$$
	where $a$ ($\neq 0$) is a real constant and $J_{n_{i}\times n_{j}}$ is a rectangular matrix of order $n_{i}\times n_{j}$ with all its entries equal to 1.\\[2mm]  
	Let $\dfrac{r_{1}}{n_{1}}=\displaystyle \max_{1\le i\le k}\left\{\dfrac{r_{i}}{n_{i}}\right\}$ and $\dfrac{r_{k}}{n_{k}}=\displaystyle\min_{1\le i\le k}\left\{\dfrac{r_{i}}{n_{i}}\right\}$.  In this section, we compute the energy of the matrix $M=M[M_{1}, M_{2}, \ldots, M_{k},a]$ under some cases. As a consequence, we determine the reciprocal distance energy of complete multipartite graph and also other energies of complete multipartite graph are obtained. For a Hermitian matrix $H$, $S^{-}(H)$ denote the sum of all negative eigenvalues of $H$.\\[2mm] 
	The following lemma is a quantitative formulation of
	Sylvester's law of inertia due to Ostrowski.
	\begin{lemma}[Ostrowski \cite{ostrowski}]\label{ost}
	Let $A$ be a Hermitian matrix of order $n$ and $S$ be a real non-singular matrix of order $n$. Then $\lambda_{i}(S^{T}AS)=\theta_{i}\lambda_{i}(A)$, where 	
$\lambda_{n}(S^{T}S)\le \theta_{i}\le\lambda_{1}(S^{T}S)$.	
\end{lemma}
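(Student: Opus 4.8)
The plan is to derive the lemma from the Courant--Fischer min--max theorem together with the positive definiteness of $S^{T}S$. Since $S$ is real we have $S^{T}=S^{*}$, so $B:=S^{T}AS$ is Hermitian and $S^{T}S$ is positive definite with eigenvalues $\mu_{1}:=\lambda_{1}(S^{T}S)\ge\cdots\ge\mu_{n}:=\lambda_{n}(S^{T}S)>0$. It suffices to prove, for every index $i$, the two-sided estimate that $\lambda_{i}(B)$ lies between $\mu_{n}\lambda_{i}(A)$ and $\mu_{1}\lambda_{i}(A)$. Indeed, if $\lambda_{i}(A)\neq 0$ we may then set $\theta_{i}=\lambda_{i}(B)/\lambda_{i}(A)$, and this estimate places $\theta_{i}\in[\mu_{n},\mu_{1}]$ whatever the sign of $\lambda_{i}(A)$ (a negative $\lambda_{i}(A)$ merely swaps the two endpoints). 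The degenerate case $\lambda_{i}(A)=0$ is handled by the classical Sylvester law of inertia: $B$ is congruent to $A$, congruence preserves inertia, hence $\lambda_{i}(B)=0$ as well and any $\theta_{i}\in[\mu_{n},\mu_{1}]$ serves.

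To prove the estimate I would change variables by $y=Sx$. Since $S$ is nonsingular this is a bijection mapping $i$-dimensional subspaces to $i$-dimensional subspaces, and the Rayleigh quotient of $B$ factors as
$$\frac{x^{*}S^{T}AS\,x}{x^{*}x}=\frac{y^{*}Ay}{y^{*}y}\cdot\frac{y^{*}y}{y^{*}(SS^{T})^{-1}y}=:R_{A}(y)\,g(y).$$
Courant--Fischer then gives $\lambda_{i}(B)=\max_{\dim Z=i}\min_{0\neq y\in Z}R_{A}(y)\,g(y)$, as well as the dual form $\lambda_{i}(B)=\min_{\dim Z=n-i+1}\max_{0\neq y\in Z}R_{A}(y)\,g(y)$. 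Because $SS^{T}$ and $S^{T}S$ have the same spectrum, the factor $g$ obeys $\mu_{n}\le g(y)\le\mu_{1}$ for every $y\neq 0$.

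The main obstacle is that the objective is a \emph{product} of two $y$-dependent factors, so the bounded factor $g$ cannot simply be pulled out of the min--max; this must be circumvented by a sign analysis of $R_{A}(y)$. I would test against the eigenspaces of $A$. For the lower bound when $\lambda_{i}(A)\ge 0$, take $Z$ to be the span of the $i$ eigenvectors of $A$ for its largest eigenvalues, so that $R_{A}(y)\ge\lambda_{i}(A)\ge 0$ on $Z$; then $R_{A}(y)g(y)\ge\mu_{n}R_{A}(y)\ge\mu_{n}\lambda_{i}(A)$, and the max--min form yields $\lambda_{i}(B)\ge\mu_{n}\lambda_{i}(A)$. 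For the upper bound, use the dual form with $Z$ the span of the $n-i+1$ eigenvectors for the smallest eigenvalues, where $R_{A}(y)\le\lambda_{i}(A)$; here the sign split is essential, since $R_{A}(y)$ may be negative on $Z$, and one checks $R_{A}(y)g(y)\le\mu_{1}\lambda_{i}(A)$ when $R_{A}(y)\ge 0$ while $R_{A}(y)g(y)\le\mu_{n}R_{A}(y)\le 0\le\mu_{1}\lambda_{i}(A)$ when $R_{A}(y)<0$, giving $\lambda_{i}(B)\le\mu_{1}\lambda_{i}(A)$. The case $\lambda_{i}(A)<0$ is entirely symmetric, the roles of $\mu_{1}$ and $\mu_{n}$ being interchanged. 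Once this bookkeeping is done the result follows; the polar decomposition $S=UP$ with $P=(S^{T}S)^{1/2}$ could alternatively be used to reduce at the outset to the positive definite case $S=P$, but it does not remove the sign analysis.
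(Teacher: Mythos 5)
This lemma appears in the paper purely as a quoted classical result --- Ostrowski's quantitative form of Sylvester's law of inertia, cited to Ostrowski (1959) --- and the paper contains no proof of it, so there is no internal argument to compare yours against. Judged on its own merits, your proof is correct, and it is in substance the standard textbook argument (essentially the proof of Theorem 4.5.9 in Horn and Johnson's \emph{Matrix Analysis}). The key points all check out: $B=S^{T}AS$ is Hermitian since $S^{T}=S^{*}$ for real $S$; the change of variables $y=Sx$ turns the Rayleigh quotient of $B$ into the product $R_{A}(y)g(y)$ with $g(y)=\frac{y^{*}y}{y^{*}(SS^{T})^{-1}y}$, and $g$ indeed takes values in $[\mu_{n},\mu_{1}]$ because $SS^{T}$ and $S^{T}S$ are cospectral; the bijection $Z=SV$ between $i$-dimensional subspaces transports both Courant--Fischer formulas correctly; and you identify and resolve the one genuine pitfall, namely that the bounded factor $g$ cannot be pulled out of a min--max of a \emph{product} without a sign analysis of $R_{A}$. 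I checked the four sign cases (the two bounds for $\lambda_{i}(A)\ge 0$ and the symmetric pair for $\lambda_{i}(A)<0$, with $\mu_{1}$ and $\mu_{n}$ interchanged) and they are all sound, as is the degenerate case: when $\lambda_{i}(A)=0$, Sylvester's law forces $\lambda_{i}(B)=0$ because inertia fixes the positions of the zero eigenvalues in the ordered spectrum, so any $\theta_{i}$ in the interval works. The division by $\lambda_{i}(A)$ to produce $\theta_{i}$, including the endpoint swap for negative $\lambda_{i}(A)$, is also handled correctly, and your closing remark is apt: the polar decomposition $S=QP$ with $Q$ orthogonal and $P=(S^{T}S)^{1/2}$ reduces everything to the positive definite case but does not remove the sign bookkeeping.
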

\ni	The following  result is widely used in the study of graph eigenvalues.
	\begin{lemma}{\rm\cite{book}}\label{l2}
		Let $M=N+P$, where N and P are Hermitian matrices of order n. Then for $1\le i, j \le n$, we have (i) $\lambda_{i}(N)+\lambda_{j}(P)\le \lambda_{i+j-n}(M)~(i+j>n)$ and (ii) 
		$\lambda_{i+j-1}(M)\le \lambda_{i}(N)+\lambda_{j}(P)~(i+j-1\le n)$.
	\end{lemma}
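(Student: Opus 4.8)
The plan is to prove both inequalities via the Courant--Fischer min--max characterization of the eigenvalues of a Hermitian matrix, writing $R_A(x)=x^{*}Ax/x^{*}x$ for the Rayleigh quotient. Recall that for a Hermitian matrix $A$ of order $n$ (with $\lambda_1(A)\ge\cdots\ge\lambda_n(A)$) and each index $k$,
$$\lambda_k(A)=\min_{\dim V=n-k+1}\;\max_{0\ne x\in V}R_A(x)=\max_{\dim V=k}\;\min_{0\ne x\in V}R_A(x),$$
where $V$ ranges over subspaces of $\mathbb{C}^{n}$ of the indicated dimension. The entire argument rests on combining this with a dimension count for the intersection of two subspaces.

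First I would establish part (ii). Let $U$ be the span of the eigenvectors of $N$ associated with $\lambda_i(N),\lambda_{i+1}(N),\ldots,\lambda_n(N)$, so that $\dim U=n-i+1$ and $x^{*}Nx\le\lambda_i(N)\,x^{*}x$ for every $x\in U$; similarly let $W$ be spanned by the eigenvectors of $P$ for $\lambda_j(P),\ldots,\lambda_n(P)$, so $\dim W=n-j+1$ and $x^{*}Px\le\lambda_j(P)\,x^{*}x$ on $W$. Since $\dim U+\dim W=2n-i-j+2$, the subspace $U\cap W$ has dimension at least $(2n-i-j+2)-n=n-(i+j-1)+1$, which is positive precisely because $i+j-1\le n$. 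Choosing any $V\subseteq U\cap W$ of dimension exactly $n-(i+j-1)+1$, every $0\ne x\in V$ satisfies
$$x^{*}Mx=x^{*}Nx+x^{*}Px\le\bigl(\lambda_i(N)+\lambda_j(P)\bigr)x^{*}x,$$
so $\max_{0\ne x\in V}R_M(x)\le\lambda_i(N)+\lambda_j(P)$. Feeding this particular $V$ into the min--max formula for $\lambda_{i+j-1}(M)$ (the minimum over $(n-(i+j-1)+1)$-dimensional subspaces is no larger than the value attained on $V$) yields (ii).

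Then (i) follows by duality rather than a fresh computation. Applying part (ii) to the Hermitian matrices $-N$, $-P$, and $-M=(-N)+(-P)$, and using the identity $\lambda_k(-A)=-\lambda_{n-k+1}(A)$, the inequality $\lambda_{i'+j'-1}(-M)\le\lambda_{i'}(-N)+\lambda_{j'}(-P)$ rewrites, after the substitution $i=n-i'+1$ and $j=n-j'+1$, exactly as $\lambda_i(N)+\lambda_j(P)\le\lambda_{i+j-n}(M)$; the admissibility condition $i'+j'-1\le n$ transforms into $i+j>n$, matching the hypothesis of (i). I expect the only delicate point to be the index bookkeeping: making the dimension $n-(i+j-1)+1$ of the target subspace match the correct form of Courant--Fischer in (ii), and then tracking the shift $k\mapsto n-k+1$ through the substitution so that the range constraint flips correctly from $i'+j'-1\le n$ to $i+j>n$. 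The geometric fact that two subspaces whose dimensions sum to more than $n$ must meet nontrivially is the conceptual heart, and once it is set up the Rayleigh-quotient estimates are immediate.
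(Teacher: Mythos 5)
Your proof is correct: both the subspace-intersection argument for (ii) via Courant--Fischer and the reduction of (i) to (ii) through $\lambda_k(-A)=-\lambda_{n-k+1}(A)$ check out, including the index bookkeeping ($i'+j'-1\le n$ does transform into $i+j>n$). The paper itself gives no proof of this lemma --- it is quoted as a known result (Weyl's inequalities) from Horn and Johnson's \emph{Matrix Analysis} --- and your argument is essentially the standard one found in that reference, so there is nothing to reconcile.
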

\begin{theorem}\label{thm1}
For the matrix $M=M[M_{1}, M_{2}, \ldots, M_{k},a]$ as defined above. We have\\[2mm]
i. $\mathcal{E}(M)=\displaystyle\sum_{i=1}^{k}\mathcal{E}(M_{i})$ if $a>0$ and $-a+\dfrac{r_{k}}{n_{k}}\ge 0.$	\\[2mm]
ii. $\mathcal{E}(M)=2\lambda_{1}(M)$ if $M_{i}$ has at most one positive eigenvalue, namely $r_{i}$, for $1\le i\le k$, and $-a+\dfrac{r_{1}}{n_{1}}\le 0$ .\\[2mm]
iii. $\mathcal{E}(M)=\displaystyle\sum_{i=1}^{k}\mathcal{E}(M_{i})-2\lambda_{n}(M)$ if $a<0$ and $a(k-1)+\dfrac{r_{k}}{n_{k}}\le \lambda_{n_{i}}\le 0$ for all $1\le i\le k$.
\end{theorem}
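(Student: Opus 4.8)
The plan is to reduce everything to the spectrum of a single $k\times k$ symmetric ``quotient'' matrix. Since $\mathbf{1}_{n_i}$ is an eigenvector of each $M_i$, writing $u_i$ for the unit vector $\mathbf{1}_{n_i}/\sqrt{n_i}$ embedded in the $i$-th block, the subspace $U=\mathrm{span}\{u_1,\dots,u_k\}$ is $M$-invariant: on $U$ the matrix $M$ acts as the symmetric matrix $\tilde B$ with $\tilde B_{ii}=r_i$ and $\tilde B_{ij}=a\sqrt{n_in_j}$ for $i\neq j$, while on $U^{\perp}$ every off-diagonal block $aJ_{n_i\times n_j}$ annihilates its argument, so $M$ restricts to $M_i$ on the orthogonal complement of $\mathbf{1}_{n_i}$ inside block $i$. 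Hence
\[
\mathrm{spec}(M)=\{\beta_1\ge\cdots\ge\beta_k\}\ \cup\ \bigcup_{i=1}^{k}\bigl(\mathrm{spec}(M_i)\setminus\{r_i\}\bigr),
\]
where the $\beta_j$ are the eigenvalues of $\tilde B$. Because $r_i\ge 0$ and $\mathrm{trace}(M_i)=0$, removing one copy of $r_i$ gives $\sum_{\lambda\in \mathrm{spec}(M_i)\setminus\{r_i\}}|\lambda|=\mathcal{E}(M_i)-r_i$, so the master identity
\[
\mathcal{E}(M)=\sum_{i=1}^{k}\mathcal{E}(M_i)-\sum_{i=1}^{k}r_i+\sum_{j=1}^{k}|\beta_j|
\]
holds in all three cases. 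The whole problem is thus to control the signs of the $\beta_j$, for which I would write $\tilde B=\mathrm{diag}(r_i-an_i)+a\,\mathbf{w}\mathbf{w}^{T}$ with $\mathbf{w}=(\sqrt{n_1},\dots,\sqrt{n_k})^{T}$, a rank-one perturbation of a diagonal matrix of trace $\sum_i r_i$.

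For (i), the hypothesis $-a+r_k/n_k\ge 0$ gives $r_i-an_i\ge 0$ for every $i$ (as $r_k/n_k$ is the minimal ratio), so for $a>0$ both summands of $\tilde B$ are positive semidefinite; hence every $\beta_j\ge 0$, $\sum_j|\beta_j|=\mathrm{trace}(\tilde B)=\sum_i r_i$, and the master identity collapses to $\mathcal{E}(M)=\sum_i\mathcal{E}(M_i)$. For (ii), the assumption $-a+r_1/n_1\le 0$ gives $r_i-an_i\le 0$ for all $i$ (now $r_1/n_1$ is maximal) and forces $a>0$; thus $\mathrm{diag}(r_i-an_i)$ is negative semidefinite and $a\mathbf{w}\mathbf{w}^{T}$ is positive semidefinite of rank one, so Lemma~\ref{l2} yields $\lambda_2(\tilde B)\le\lambda_1(\mathrm{diag}(r_i-an_i))+\lambda_2(a\mathbf{w}\mathbf{w}^{T})\le 0$; that is, $\tilde B$ has at most one positive eigenvalue. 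Combined with the hypothesis that each $M_i$ has at most the single positive eigenvalue $r_i$ (so every block-eigenvalue surviving in $\mathrm{spec}(M)$ is $\le 0$), this shows $M$ has at most one positive eigenvalue, and since $\mathrm{trace}(M)=0$ we conclude $\mathcal{E}(M)=2\sum_{\lambda>0}\lambda=2\lambda_1(M)$.

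Part (iii) is the substantive one. For $a<0$ the perturbation $a\mathbf{w}\mathbf{w}^{T}$ is negative semidefinite of rank one, so Lemma~\ref{l2} (interlacing) gives $\beta_{k-1}\ge\min_i(r_i-an_i)>0$, and only $\beta_k$ can be non-positive. If I can show $\beta_k=\lambda_n(M)\le 0$, then $\sum_j|\beta_j|=\sum_i r_i-2\beta_k$ and the master identity becomes exactly $\mathcal{E}(M)=\sum_i\mathcal{E}(M_i)-2\lambda_n(M)$. Since $\beta_k\in\mathrm{spec}(M)$ the inequality $\lambda_n(M)\le\beta_k$ is automatic, so the real task is $\beta_k\le\lambda_{n_i}(M_i)$ for every $i$; by the hypothesis $a(k-1)+r_k/n_k\le\lambda_{n_i}(M_i)$ it suffices to prove the single inequality $\beta_k\le t$, where $t:=a(k-1)+r_k/n_k\le 0$. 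Writing $d_i=r_i-an_i>0$, a rank-one (Schur-complement) computation shows that $\tilde B-tI$ fails to be positive definite, i.e. $\beta_k\le t$, precisely when $g(t):=|a|\sum_i n_i/(d_i-t)\ge 1$.

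The scalar inequality $g(t)\ge 1$ is, I expect, the crux of the whole theorem, and I would prove it by feeding the block trace condition into the hypothesis. From $\mathrm{trace}(M_i)=0$ and the fact that $r_i$ is an eigenvalue, the remaining $n_i-1$ eigenvalues average to $-r_i/(n_i-1)$, so $\lambda_{n_i}(M_i)\le -r_i/(n_i-1)$; combined with $t\le\lambda_{n_i}(M_i)$ this yields $r_i\le -t(n_i-1)=|t|(n_i-1)$. Hence $d_i-t=r_i+|a|n_i+|t|\le |t|(n_i-1)+|a|n_i+|t|=n_i(|t|+|a|)$, so each term satisfies $|a|n_i/(d_i-t)\ge |a|/(|t|+|a|)$ and $g(t)\ge k|a|/(|t|+|a|)$. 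Finally $|t|=|a|(k-1)-r_k/n_k\le |a|(k-1)$ gives $|t|+|a|\le k|a|$ and therefore $g(t)\ge 1$, as required. Assembling the three ingredients---the quotient reduction yielding the master identity, the semidefiniteness/Weyl sign analysis of $\tilde B$, and this final inequality placing $\beta_k$ below every $\lambda_{n_i}(M_i)$---completes the proof.
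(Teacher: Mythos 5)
Your proof is correct, and it reaches the paper's key spectral decomposition $\mathrm{Spec}(M)=\bigcup_{i=1}^{k}\bigl(\mathrm{Spec}(M_i)\setminus\{r_i\}\bigr)\cup\mathrm{Spec}(\tilde B)$ by an invariant-subspace argument that is equivalent to (if slicker than) the paper's explicit orthogonal similarity followed by a Laplace expansion of $\det(xI-M^{\prime})$; from there, however, your analysis of the $k\times k$ quotient $\tilde B$ (the paper's $M^{\prime\prime}$) genuinely diverges. The paper factors $M^{\prime\prime}=C(aJ_{k}-aI_{k}+D)C$ with $C=\mathrm{diag}(\sqrt{n_1},\dots,\sqrt{n_k})$ and $D=\mathrm{diag}(r_1/n_1,\dots,r_k/n_k)$, and controls the inertia of $M^{\prime\prime}$ through Sylvester's law of inertia together with Weyl's inequalities (Lemma \ref{l2}), invoking Ostrowski's quantitative refinement (Lemma \ref{ost}) in case (iii) to force $\lambda_k(M^{\prime\prime})\le\lambda_k(aJ_{k}-aI_{k}+D)$. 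You instead exploit the rank-one structure $\tilde B=\mathrm{diag}(r_i-an_i)+a\mathbf{w}\mathbf{w}^{T}$: simultaneous semidefiniteness of the two summands settles (i), Weyl plus the rank-one bound settles (ii), and in (iii) a Schur-complement criterion ($\beta_k\le t$ if and only if $|a|\sum_i n_i/(d_i-t)\ge 1$) replaces Ostrowski entirely. This buys something real: your case (iii) feeds the trace-zero hypothesis on each block into the criterion (via $\lambda_{n_i}(M_i)\le -r_i/(n_i-1)$, hence $r_i\le|t|(n_i-1)$) and verifies it at the threshold $t=a(k-1)+r_k/n_k$, which is exactly the hypothesis of the theorem as stated; the paper's own Case III silently works with the stronger assumption $a(k-1)+r_1/n_1\le\lambda_{n_i}(M_i)$ (note $r_1/n_1\ge r_k/n_k$), so its chain $\lambda_k(M^{\prime\prime})\le a(k-1)+r_1/n_1\le\lambda_{n_i}(M_i)$ is not justified under the stated hypothesis --- your route actually closes that gap. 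The one caveat is the degenerate case $n_i=1$, where your averaging step $\lambda_{n_i}(M_i)\le -r_i/(n_i-1)$ is vacuous; there $M_i=[0]$ and $r_i=0$, so the bound $r_i\le|t|(n_i-1)$ holds trivially and nothing breaks, but this should be said explicitly.
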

\begin{proof}
Let $s_{i}$ denote the number of positive eigenvalues of the matrix $M_{i}$ and	
let $\lambda_{i1}\ge \lambda_{i2}\ge \ldots\ge \lambda_{is_{i}}\ge\lambda_{i(s_{i}+1)}\ge \ldots\ge \lambda_{in_{i}}$ be the eigenvalues of the matrix $M_{i}$. Then \begin{eqnarray}\label{eq1}\mathcal{E}(M_{i})&=&\lambda_{i1}+\lambda_{i2}+\ldots+\lambda_{i(s_{i}-1)}+\lambda_{is_{i}}-\lambda_{i(s_{i}+1)}-\ldots-\lambda_{in_{i}}\notag\\
&=&-2(\lambda_{i(s_{i}+1)}+\lambda_{i((s_{i}+2))}+\ldots+\lambda_{in_{i}})~~(\text{ Because}, trace(M_{i})=0)\notag\\
&=&-2S^{-}(M_{i})
\end{eqnarray} Since $M_{i}$ is a real symmetric matrix of $n_{i}$, there exists an orthogonal matrix $P_{i}$ such that $P^{T}_{i}M_{i}P_{i}=D_{i}$, where $D_{i}$ is a diagonal matrix of order $n_{i}$ having $\lambda_{i1}, \lambda_{i2}, \ldots, \lambda_{in_{i}}$ as its diagonal entries. Further, since $M_{i}1_{n_{i}}=r_{i}1_{n_{i}}$, we have $\lambda_{i\ell}=r_{i}$ for some $1\le \ell\le n_{i}$, and without loss of generality, we can assume that the first column of $P_{i}$ is equal to $\dfrac{1_{n_{i}}}{\sqrt{n_{i}}}$. Therefore, $P^{T}_{i}J_{n_{i}\times n_{j}}P_{j}=\sqrt{n_{i}n_{j}}e_{n_{i}\times n_j}$, where $e_{n_{i}\times n_{j}}$ is a rectangular matrix of order $n_{i}\times n_{j}$ whose all entries are 0, except the first diagonal entry which is equal to  1. Consider
\begin{align*}
M&=\left[\begin{array}{ccccc}
	M_{1} &aJ_{n_{1}\times n_{2}}&aJ_{n_{1}\times n_{3}}&\ldots& aJ_{n_{1}\times n_{k}}\\[2mm]	
	aJ_{n_{2}\times n_{1}}&M_{2}&aJ_{n_{2}\times n_{3}}&\ldots& aJ_{n_2\times n_{k}}\\[2mm]	
	\vdots&\vdots&\ddots&\vdots&\vdots\\[2mm]	
	aJ_{n_{k-1}\times n_{1}}&aJ_{n_{k-1}\times n_{2}}&\ldots& M_{k-1}&aJ_{n_{k-1}\times n_{k}}\\[2mm]	
	aJ_{n_{k}\times n_{1}}&aJ_{n_{k}\times n_{2}}&\ldots&aJ_{n_{k}\times n_{k-1}}& M_{k}\\
\end{array}\right]\end{align*}\begin{align*}
&=\left[\begin{array}{ccccc}
	P_{1}D_{1}P^{T}_{1} &aJ_{n_{1}\times n_{2}}&aJ_{n_{1}\times n_{3}}&\ldots& aJ_{n_{1}\times n_{k}}\\[2mm]	
	aJ_{n_{2}\times n_{1}}&P_{2}D_{2}P_{2}^{T}&aJ_{n_{2}\times n_{3}}&\ldots& aJ_{n_2\times n_{k}}\\[2mm]	
	\vdots&\vdots&\ddots&\vdots&\vdots\\[2mm]	
	aJ_{n_{k-1}\times n_{1}}&aJ_{n_{k-1}\times n_{2}}&\ldots& P_{k-1}D_{k-1}P^{T}_{k-1}&aJ_{n_{k-1}\times n_{k}}\\[2mm]	
	aJ_{n_{k}\times n_{1}}&aJ_{n_{k}\times n_{2}}&\ldots&aJ_{n_{k}\times n_{k-1}}& P_{k}D_{k}P^{T}_{k}\\
\end{array}\right]\\[10mm]
&=\left[\begin{array}{ccccc}
	P_{1} &0&0&\ldots& 0\\[2mm]	
	0&	P_{2}&0&\ldots& 0\\[2mm]	
	\vdots&\vdots&\ddots&\vdots&\vdots\\[2mm]	
	0&0&\ldots& P_{k-1}&0\\[2mm]	
	0&0&\ldots&0&P_{k}\\
\end{array}\right]\left[\begin{array}{ccccc}
D_{1} &aP^{T}_{1}J_{n_{1}\times n_{2}}P_{2}&aP^{T}_{1}J_{n_{1}\times n_{3}}P_{3}\\[2mm]	
aP^{T}_{2}J_{n_{2}\times n_{1}}P_{1}&D_{2}&aP^{T}_{2}J_{n_{2}\times n_{3}}P_{3}\\[2mm]	
\vdots&\vdots&\ddots\\[2mm]	
aP^{T}_{k-1}J_{n_{k-1}\times n_{1}}P_{1}&aP^{T}_{k-1}J_{n_{k-1}\times n_{2}}P_{2}&\ldots\\[2mm]	
aP^{T}_{k}J_{n_{k}\times n_{1}}P_{1}&aP^{T}_{k}J_{n_{k}\times n_{2}}P_{2}&\ldots\\
\end{array}\right.\\[2mm]\\&
\left.\begin{array}{ccccc}
	\ldots& aP^{T}_{1}J_{n_{1}\times n_{k}}P_{k}\\[2mm]	
	\ldots& aP^{T}_{2}J_{n_2\times n_{k}}P_{k}\\[2mm]	
	\vdots&\vdots\\[2mm]	
	 D_{k-1}&aP^{T}_{k-1}J_{n_{k-1}\times n_{k}}P_{k}\\[2mm]	
	aP^{T}_{k}J_{n_{k}\times n_{k-1}}P_{k-1}& D_{k}\\
\end{array}\right]\left[\begin{array}{ccccc}
	P^{T}_{1} &0&0&\ldots& 0\\[2mm]	
	0&	P^{T}_{2}&0&\ldots& 0\\[2mm]	
	\vdots&\vdots&\ddots&\vdots&\vdots\\[2mm]	
	0&0&\ldots& P^{T}_{k-1}&0\\[2mm]	
	0&0&\ldots&0&P^{T}_{k}\\
\end{array}\right]\\[10mm]
\end{align*}
\begin{eqnarray*}\label{eq2}
&=&\left[\begin{array}{ccccc}
	P_{1} &0&0&\ldots& 0\\[2mm]	
	0&	P_{2}&0&\ldots& 0\\[2mm]	
	\vdots&\vdots&\ddots&\vdots&\vdots\\[2mm]	
	0&0&\ldots& P_{k-1}&0\\[2mm]	
	0&0&\ldots&0&P_{k}
\end{array}\right]\left[\begin{array}{ccccc}
D_{1} &a\sqrt{n_{1}n_{2}}e_{n_1\times n_2}&a\sqrt{n_{1}n_{3}}e_{n_1\times n_3}\\[2mm]	
a\sqrt{n_{2}n_{1}}e_{n_2\times n_1}&D_{2}&a\sqrt{n_{2}n_{3}}e_{n_2\times n_3}\\[2mm]	
\vdots&\vdots&\ddots\\[2mm]	
a\sqrt{n_{k-1}n_{1}}e_{n_{k-1}\times n_1}&a\sqrt{n_{k-1}n_{2}}e_{n_{k-1}\times n_2}&\ldots\\[2mm]	
a\sqrt{n_{k}n_{1}}e_{n_k\times n_1}&a\sqrt{n_{k}n_{2}}e_{n_k\times n_2}&\ldots\\
\end{array}\right.\notag\\[10mm]\end{eqnarray*}
\begin{eqnarray*}
&&
\left.\begin{array}{ccccc}
	\ldots& a\sqrt{n_{1}n_{k}}e_{n_1\times n_k}\\[2mm]	
	\ldots& a\sqrt{n_{2}n_{k}}e_{n_2\times n_k}\\[2mm]	
	\vdots&\vdots\\[2mm]	
	 D_{k-1}&a\sqrt{n_{k-1}n_{k}}e_{n_{k-1}\times n_k}\\[2mm]	
	a\sqrt{n_{k}n_{k-1}}e_{n_k\times n_{k-1}}& D_{k}\\
\end{array}\right]
\left[\begin{array}{ccccc}
	P^{T}_{1} &0&0&\ldots& 0\\[2mm]	
	0&	P^{T}_{2}&0&\ldots& 0\\[2mm]	
	\vdots&\vdots&\ddots&\vdots&\vdots\\[2mm]	
	0&0&\ldots& P^{T}_{k-1}&0\\[2mm]	
	0&0&\ldots&0&P^{T}_{k}\\
\end{array}\right].\notag\\[2mm]
\end{eqnarray*}
Let\begin{align*}M^{\prime}=&\left[\begin{array}{ccccc}
	D_{1} &a\sqrt{n_{1}n_{2}}e_{n_1\times n_2}&a\sqrt{n_{1}n_{3}}e_{n_1\times n_3}\\[2mm]	
	a\sqrt{n_{2}n_{1}}e_{n_2\times n_1}&D_{2}&a\sqrt{n_{2}n_{3}}e_{n_2\times n_3}\\[2mm]	
	\vdots&\vdots&\ddots\\[2mm]	
	a\sqrt{n_{k-1}n_{1}}e_{n_{k-1}\times n_1}&a\sqrt{n_{k-1}n_{2}}e_{n_{k-1}\times n_2}&\ldots\\[2mm]	
	a\sqrt{n_{k}n_{1}}e_{n_k\times n_1}&a\sqrt{n_{k}n_{2}}e_{n_k\times n_2}&\ldots\\
\end{array}\right.\\[10mm]
&\left.\begin{array}{ccccc}
	\ldots& a\sqrt{n_{1}n_{k}}e_{n_1\times n_k}\\[2mm]	
	\ldots& a\sqrt{n_{2}n_{k}}e_{n_2\times n_k}\\[2mm]	
	\vdots&\vdots\\[2mm]	
	 D_{k-1}&a\sqrt{n_{k-1}n_{k}}e_{n_{k-1}\times n_k}\\[2mm]	
	a\sqrt{n_{k}n_{k-1}}e_{n_k\times n_{k-1}}& D_{k}\\
\end{array}\right].\notag    	
\end{align*}
Then from equation (\ref{eq2}) the matrices $M$ and $M^{\prime}$ are similar. Thus $Spec(M)=Spec(M^{\prime})$. 
Now, consider $det(xI-M^{\prime})$. Expanding $det(xI-M^{\prime})$ by Laplace's method along all the columns except 1$^{\text{st}}$, $(n_{1}+1)^{\text{th}}$, $(n_{1}+n_{2}+1)^{\text{th}},\ldots,(n_{1}+n_{2}+\ldots+n_{k-1}+1)^{\text{th}}$ columns, we get
$$det(xI-M^{\prime})=det(xI-M^{\prime\prime})\displaystyle\prod_{i=1}^{k} \big[det(xI-D_{i})(x-r_{i})^{-1}\big], $$
where \begin{align*}M^{\prime\prime}=&\left[\begin{array}{ccccc}
		r_{1} &a\sqrt{n_{1}n_{2}}&a\sqrt{n_{1}n_{3}}&\ldots&a\sqrt{n_{1}n_{k}}\\[2mm]	
		a\sqrt{n_{2}n_{1}}&r_{2}&a\sqrt{n_{2}n_{3}}&\ldots&a\sqrt{n_{2}n_{k}}\\[2mm]	
		\vdots&\vdots&\ddots&\vdots&\vdots\\[2mm]	
		a\sqrt{n_{k-1}n_{1}}&a\sqrt{n_{k-1}n_{2}}&\ldots&r_{k-1}&a\sqrt{n_{k-1}n_{k}}\\[2mm]	
		a\sqrt{n_{k}n_{1}}&a\sqrt{n_{k}n_{2}}&\ldots&a\sqrt{n_{k}n_{k-1}}&r_{k}\\
	\end{array}\right].
\end{align*}
Thus, \begin{equation}\label{spec}
	Spec(M)=\displaystyle\bigcup_{i=1}^{k} (Spec(M_{i})\backslash\{r_{i}\})\bigcup Spec(M^{\prime\prime}).\end{equation} Since $trace(M)=0$ and $r_{i}\ge 0$, we get 
\begin{eqnarray}\label{eq3}
\mathcal{E}(M)&=&-2\sum_{i=1}^{k}S^{-}(M_{i})-2S^{-}(M^{\prime\prime})\notag\\&=&\sum_{i=1}^{k}\mathcal{E}(M_{i})-2S^{-}(M^{\prime\prime}), \text{by equation}~(\ref{eq1}). 	
\end{eqnarray}
Note that $M^{\prime\prime}=C(aJ_{k\times k}-aI_{k}+D)C$, where $C=diag(\sqrt{n_{1}},\sqrt{n_{2}},\ldots,\sqrt{n_{k}})$ and $D=diag\left(\dfrac{r_{1}}{n_{1}},\dfrac{r_{2}}{n_{2}},\ldots,\dfrac{r_{k}}{n_{k}}\right).$ Thus the matrices $M^{\prime\prime}$ and $(aJ_{k\times k}-aI_{k})+D$ are congruent to each other. Thus by Sylvester's law of inertia the matrices $M^{\prime\prime}$ and $(aJ_{k\times k}-aI_{k})+D$ have same rank, inertia and signature.\\
Case I: Suppose $a>0$ and $-a+\dfrac{r_{k}}{n_{k}}\ge 0$. By Lemma \ref{l2}, we have $$\lambda_{k}(aJ_{k\times k}-aI_{k})+\lambda_{k}(D)\le \lambda_{k}(aJ_{k\times k}-aI_{k}+D).$$
Therefore, $$0\le-a+\dfrac{r_{k}}{n_{k}}\le\lambda_{k}(aJ_{k\times k}-aI_{k}+D).$$ Thus, $aJ_{k\times k}-aI_{k}+D$ is positive semidefinite. Since $M^{\prime\prime}$ and $aJ_{k\times k}-aI_{k}+D$ are congruent, it follows that $M^{\prime\prime}$ is positive semidefinite. Hence $S^{-}(M^{\prime\prime})=0.$ So, from (\ref{eq3}), we get
$$\mathcal{E}(M)=\sum_{i=1}^{k}\mathcal{E}(M_{i}).$$ 
Thus proof of (i) is done.\\
Case II: Suppose $M_{i}$ has at most one positive eigenvalue, namely $r_{i}$, for $1\le i\le k$, and $-a+\dfrac{r_{1}}{n_{1}}\le 0$. By Lemma \ref{l2}, we have $$\lambda_{2}(aJ_{k\times k}-aI_{k}+D)\le \lambda_{2}(aJ_{k\times k}-aI_{k})+\lambda_{1}(D)$$
Therefore, $$\lambda_{2}(aJ_{k\times k}-aI_{k}+D)\le -a+\dfrac{r_{1}}{n_{1}}\le 0.$$ 
Thus, $aJ_{k\times k}-aI_{k}+D$ has only one positive eigenvalue. Since $M^{\prime\prime}$ and $aJ_{k\times k}-aI_{k}+D$ are congruent matrices, it follows that $M^{\prime\prime}$ has only one positive eigenvalue. Also, since $M_{i}$ has at most one positive eigenvalue, namely $r_{i}$, for $1\le i\le k$, $M$ has only one positive eigenvalue by (\ref{spec}). Further since $trace(M)=0$, we must have $\mathcal{E}(M)=2\lambda_{1}(M)$. This proves (ii).\\
Case III: Suppose $a<0$ and $\dfrac{r_{1}}{n_{1}}+a(k-1)\le\lambda_{in_{i}}\le 0$ for all $1\le i\le k$. From Lemma \ref{l2}, we get
$$\lambda_{k-1}(aJ_{k\times k}-aI_{k})+\lambda_{k}(D)\le \lambda_{k-1}(aJ_{k\times k}-aI_{k}+D)$$
and
$$\lambda_{k}(aJ_{k\times k}-aI_{k}+D)\le \lambda_{k}(aJ_{k\times k}-aI_{k})+\lambda_{1}(D).$$
Therefore,
$$0\le -a+\dfrac{r_{k}}{n_{k}}\le \lambda_{k-1}(aJ_{k\times k}-aI_{k}+D)$$
and
$$\lambda_{k}(aJ_{k\times k}-aI_{k}+D)\le a(k-1)+\dfrac{r_{1}}{n_{1}}\le 0.$$
Thus $aJ_{k\times k}-aI_{k}+D$ has at most one negative eigenvalue. Since $aJ_{k\times k}-aI_{k}+D$ and $M^{\prime\prime}$  are congruent matrices, $M^{\prime\prime}$ has at most one negative eigenvalue. Moreover, $\lambda_{k}(M^{\prime\prime})\le \lambda_{k}(aJ_{k\times k}-aI_{k}+D)$, by Lemma \ref{ost}. Therefore  by (\ref{spec}), $\lambda_{k}(M^{\prime\prime})=\lambda_{n}(M)$ because $\lambda_{k}(M^{\prime\prime})\le a(k-1)+\dfrac{r_{1}}{n_{1}}\le \lambda_{in_{i}}$. Hence from equation (\ref{eq3}), we get $\mathcal{E}(M)=\sum_{i=1}^{k}\mathcal{E}(M_{i})-2\lambda_{n}(M)$. This completes the proof.
\end{proof}
\begin{cor}\rm{\cite{distance}}
The distance energy of  the complete multipartite graph $K_{n_{1},n_{2},\ldots,n_{k}}$ is equal to $4(n-k)$, where $n=n_{1}+n_{2}+\ldots+n_{k}$ and $n_{1}\ge n_{2}\ge \ldots\ge n_{k}\ge2$.
\end{cor}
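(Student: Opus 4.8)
The plan is to recognize the distance matrix of $K_{n_{1},n_{2},\ldots,n_{k}}$ as a special case of the partitioned matrix $M[M_{1},\ldots,M_{k},a]$ and then invoke part (i) of Theorem \ref{thm1}. In a complete multipartite graph any two vertices lying in different parts are adjacent, hence at distance $1$, while two distinct vertices in the same part are non-adjacent but have a common neighbour, hence at distance $2$. Consequently the distance matrix $D$ has diagonal blocks $M_{i}=2(J_{n_{i}}-I_{n_{i}})$ and off-diagonal blocks $J_{n_{i}\times n_{j}}$, so that $D=M[M_{1},\ldots,M_{k},1]$ with $a=1$.

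Next I would check the hypotheses required by Theorem \ref{thm1}(i). Each $M_{i}$ is real symmetric with $\mathrm{trace}(M_{i})=0$, and $M_{i}\mathbf{1}_{n_{i}}=2(n_{i}-1)\mathbf{1}_{n_{i}}$, so $r_{i}=2(n_{i}-1)\ge 0$. Since $r_{i}/n_{i}=2-2/n_{i}$ is increasing in $n_{i}$, the ordering $n_{1}\ge\cdots\ge n_{k}$ makes $r_{1}/n_{1}$ the maximum and $r_{k}/n_{k}$ the minimum, matching the normalisation preceding Theorem \ref{thm1}. With $a=1>0$ the remaining condition $-a+r_{k}/n_{k}\ge 0$ reads $1-2/n_{k}\ge 0$, i.e. $n_{k}\ge 2$, which is exactly the standing assumption. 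Thus Theorem \ref{thm1}(i) applies and yields $\mathcal{E}(D)=\sum_{i=1}^{k}\mathcal{E}(M_{i})$.

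Finally I would evaluate the individual summands. The matrix $J_{n_{i}}-I_{n_{i}}$ has eigenvalue $n_{i}-1$ once and $-1$ with multiplicity $n_{i}-1$, so $M_{i}=2(J_{n_{i}}-I_{n_{i}})$ has eigenvalues $2(n_{i}-1)$ and $-2$ (the latter with multiplicity $n_{i}-1$); hence $\mathcal{E}(M_{i})=2(n_{i}-1)+2(n_{i}-1)=4(n_{i}-1)$. Summing over $i$ gives $\mathcal{E}(D)=\sum_{i=1}^{k}4(n_{i}-1)=4(n-k)$, the claimed value. There is no genuine obstacle here: the only points that require care are the correct identification of the two distance values and the observation that the threshold $n_{k}\ge 2$ is precisely what forces the congruent matrix $aJ_{k\times k}-aI_{k}+D$ to be positive semidefinite, which is exactly the condition triggering case (i) of the theorem.
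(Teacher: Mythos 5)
Your proposal is correct and follows essentially the same route as the paper: identify the distance matrix as $M[2(J_{n_{1}}-I_{n_{1}}),\ldots,2(J_{n_{k}}-I_{n_{k}}),1]$, verify the hypothesis $-1+r_{k}/n_{k}\ge 0$ (equivalent to $n_{k}\ge 2$), and apply Theorem \ref{thm1}(i) together with $\mathcal{E}(2(J_{n_{i}}-I_{n_{i}}))=4(n_{i}-1)$. Your write-up is in fact slightly more careful than the paper's, since you explicitly check the ordering convention $r_{1}/n_{1}=\max$, $r_{k}/n_{k}=\min$ and spell out the eigenvalue computation, both of which the paper leaves implicit.
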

\begin{proof}
From the definition of the distance matrix of a connected graph, the distance matrix of the complete multipartite graph $K_{n_{1},n_{2},\ldots n_{k}}$  is\\ $M[2(J_{n_{1}}-I_{n_{1}}),2(J_{n_{1}}-I_{n_{1}}),\ldots,2(J_{n_{1}}-I_{n_{1}}),1]$. Since $-1+\dfrac{r_{i}}{n_{i}}=-1+\dfrac{2(n_{i}-1)}{n_{i}}=1-\dfrac{2}{n_{i}}\ge 0$ for all $i$ and $\mathcal{E}(2(J_{n_{i}}-I_{n_{i}}))=4(n_{i}-1)$, from Theorem \ref{thm1} (i), we get $\mathcal{E}_{D}(G)=\sum_{i=1}^{n}\mathcal{E}(2(J_{n_{i}}-I_{n_{i}}))=\sum_{i=1}^{n}4(n_{i}-1)=4(n-k)$.
\end{proof}
\begin{cor}\label{cor}
	The reciprocal distance energy of  the complete multipartite graph $G=K_{n_{1},n_{2},\ldots, n_{k}}$ is equal to $2\lambda_{1}(RD(G))$.
\end{cor}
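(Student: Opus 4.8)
The plan is to realize $RD(G)$ as an instance of the partitioned matrix $M[M_{1},\ldots,M_{k},a]$ from Theorem~\ref{thm1} and then invoke part (ii). First I would record the distances in $G=K_{n_{1},\ldots,n_{k}}$: two vertices lying in the same part are non-adjacent but share a common neighbour in any other part (here $k\ge 2$), hence are at distance $2$; two vertices in distinct parts are adjacent, hence at distance $1$. Consequently the reciprocal distance is $\tfrac12$ within a part and $1$ across parts, so listing the vertices part by part gives
\[
RD(G)=M\!\left[\tfrac12(J_{n_{1}}-I_{n_{1}}),\,\tfrac12(J_{n_{2}}-I_{n_{2}}),\,\ldots,\,\tfrac12(J_{n_{k}}-I_{n_{k}}),\,1\right],
\]
that is, $M_{i}=\tfrac12(J_{n_{i}}-I_{n_{i}})$ and $a=1$.

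Next I would verify the hypotheses of Theorem~\ref{thm1}(ii) for these blocks. Each $M_{i}$ is real symmetric with $trace(M_{i})=0$ and $M_{i}\mathbf{1}_{n_{i}}=\tfrac{n_{i}-1}{2}\mathbf{1}_{n_{i}}$, so $r_{i}=\tfrac{n_{i}-1}{2}\ge 0$. Since $J_{n_{i}}$ has eigenvalues $n_{i}$ (simple) and $0$ (multiplicity $n_{i}-1$), the matrix $M_{i}$ has eigenvalues $\tfrac{n_{i}-1}{2}=r_{i}$ (simple) and $-\tfrac12$ (multiplicity $n_{i}-1$); hence $M_{i}$ has at most one positive eigenvalue and it equals $r_{i}$, which is precisely the block hypothesis. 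It then remains to check the scalar inequality $-a+\tfrac{r_{1}}{n_{1}}\le 0$. Here $\tfrac{r_{i}}{n_{i}}=\tfrac{n_{i}-1}{2n_{i}}=\tfrac12-\tfrac{1}{2n_{i}}<\tfrac12$, and this quantity increases with $n_{i}$, so the ordering $n_{1}\ge n_{2}\ge\cdots\ge n_{k}$ indeed makes $\tfrac{r_{1}}{n_{1}}$ the maximum of the ratios demanded by the set-up of the theorem. Therefore $-a+\tfrac{r_{1}}{n_{1}}=-1+\bigl(\tfrac12-\tfrac{1}{2n_{1}}\bigr)<0$.

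With both hypotheses confirmed, Theorem~\ref{thm1}(ii) gives $\mathcal{E}(RD(G))=2\lambda_{1}(RD(G))$, and since $RD(G)$ is symmetric its singular values are the absolute values of its eigenvalues, so $\mathcal{E}(RD(G))=\sum_{i}|\lambda_{i}(RD(G))|=\mathcal{E}_{RD}(G)$, which is the assertion. This is essentially a direct verification, so no step is genuinely difficult; the only points needing care are ensuring the eigenvalue description also covers degenerate blocks with $n_{i}=1$ (where $M_{i}=[0]$ has no positive eigenvalue, still consistent with ``at most one''), and confirming that the monotonicity of $\tfrac{r_{i}}{n_{i}}$ in $n_{i}$ aligns the max/min convention of Theorem~\ref{thm1} with the chosen ordering of the parts.
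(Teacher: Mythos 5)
Your proposal is correct and follows essentially the same route as the paper: both realize $RD(G)$ as $M\!\left[\tfrac12(J_{n_{1}}-I_{n_{1}}),\ldots,\tfrac12(J_{n_{k}}-I_{n_{k}}),1\right]$ and apply Theorem~\ref{thm1}(ii) after checking $-1+\tfrac{r_{i}}{n_{i}}\le 0$. If anything, you are more careful than the paper, which only verifies the scalar inequality and leaves implicit the hypothesis that each block $M_{i}$ has at most one positive eigenvalue (namely $r_{i}$), a point you check explicitly along with the degenerate case $n_{i}=1$.
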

\begin{proof}
	From the definition of the reciprocal distance matrix of a connected graph, the reciprocal distance matrix of the complete multipartite graph $G$  is\\ $M[\frac{1}{2}(J_{n_{1}}-I_{n_{1}}),\frac{1}{2}(J_{n_{2}}-I_{n_{2}}),\ldots,\frac{1}{2}(J_{n_{k}}-I_{n_{k}}),1]$. Since $-1+\dfrac{r_{i}}{n_{i}}=-1+\dfrac{n_{i}-1}{2n_{i}}=-\dfrac{1}{2}-\dfrac{1}{2n_{i}}\le 0$ for all $i$, from Theorem \ref{thm1} (ii), we get $\mathcal{E}_{RD}(G)=2\lambda_{1}(RD(G)).$
\end{proof}
\begin{cor}\rm{\cite{seidel}}
The Seidel energy of the complete multipartite graph $G=K_{n_{1},n_{2},\ldots,n_{k}}$ with $k\ge3$ is equal to $2(n-k)-2\lambda_{n}(S(G))$, where $n=n_{1}+n_{2}+\ldots+n_{k}$.	
\end{cor}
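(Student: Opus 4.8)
The plan is to recognize the Seidel matrix $S(G)$ as an instance of the partitioned matrix $M$ of Theorem \ref{thm1} and then invoke part (iii). The Seidel matrix has entries $-1$ on adjacent pairs, $+1$ on non-adjacent pairs, and $0$ on the diagonal. In the complete multipartite graph $G=K_{n_{1},\ldots,n_{k}}$ two vertices are non-adjacent precisely when they lie in the same part and adjacent otherwise; hence each diagonal block of $S(G)$ equals $J_{n_{i}}-I_{n_{i}}$ while every off-diagonal block equals $-J_{n_{i}\times n_{j}}$. In the notation of the theorem this reads
$$S(G)=M\big[J_{n_{1}}-I_{n_{1}},\,J_{n_{2}}-I_{n_{2}},\,\ldots,\,J_{n_{k}}-I_{n_{k}},\,-1\big],$$
so that $M_{i}=J_{n_{i}}-I_{n_{i}}$ and $a=-1$.

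First I would check the standing hypotheses on the blocks: $M_{i}\mathbf{1}_{n_{i}}=(n_{i}-1)\mathbf{1}_{n_{i}}$, giving $r_{i}=n_{i}-1\ge 0$, and $trace(M_{i})=0$. Since $a=-1<0$, the relevant case is (iii), whose hypothesis requires $a(k-1)+\tfrac{r_{1}}{n_{1}}\le \lambda_{in_{i}}\le 0$ for every $i$. The spectrum of $J_{n_{i}}-I_{n_{i}}$ is $n_{i}-1$ (simple) together with $-1$ of multiplicity $n_{i}-1$, so $\lambda_{in_{i}}=-1$ for $n_{i}\ge 2$, which is the only non-trivial case (a singleton part gives the $1\times 1$ zero block, with $\lambda_{in_{i}}=0$, causing no difficulty). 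With $\tfrac{r_{i}}{n_{i}}=1-\tfrac{1}{n_{i}}$, the condition $a(k-1)+\tfrac{r_{1}}{n_{1}}\le -1$ becomes $1-\tfrac{1}{n_{1}}-(k-1)\le -1$, i.e.\ $\tfrac{1}{n_{1}}\ge 3-k$. This is exactly the point at which the restriction $k\ge 3$ enters: for $k\ge 3$ the right-hand side is non-positive and the inequality is automatic, whereas for $k=2$ it fails.

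Once the hypothesis of (iii) is verified, the conclusion gives $\mathcal{E}(S(G))=\sum_{i=1}^{k}\mathcal{E}(M_{i})-2\lambda_{n}(S(G))$. It remains only to evaluate the block energies: from the spectrum above, $\mathcal{E}(J_{n_{i}}-I_{n_{i}})=(n_{i}-1)+(n_{i}-1)=2(n_{i}-1)$, so $\sum_{i=1}^{k}\mathcal{E}(M_{i})=2\sum_{i=1}^{k}(n_{i}-1)=2(n-k)$. Substituting yields $\mathcal{E}(S(G))=2(n-k)-2\lambda_{n}(S(G))$, as claimed.

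I expect the only real obstacle to be the verification of the eigenvalue inequality in the hypothesis of case (iii), which hinges on correctly identifying $\lambda_{in_{i}}=-1$ and tracking the sign of $a(k-1)$; this is also the step that isolates why $k\ge 3$ is needed and, implicitly, that $\lambda_{n}(S(G))$ is contributed by the quotient matrix $M^{\prime\prime}$ rather than by any block $M_{i}$.
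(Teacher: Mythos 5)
Your proof is correct and takes essentially the same route as the paper: identify $S(G)=M\big[J_{n_{1}}-I_{n_{1}},J_{n_{2}}-I_{n_{2}},\ldots,J_{n_{k}}-I_{n_{k}},-1\big]$ and invoke Theorem~\ref{thm1}(iii). The paper's own proof consists of exactly this identification and nothing more, so your verification of the case-(iii) hypothesis (the step where $k\ge 3$ is genuinely used, with $\lambda_{in_{i}}=-1$ for $n_{i}\ge 2$) and your computation $\sum_{i}\mathcal{E}(J_{n_{i}}-I_{n_{i}})=2(n-k)$ simply supply details the paper leaves implicit.
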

\begin{proof}
From the definition of the Seidel matrix of a graph, the Seidel matrix of the complete multipartite graph $G$ is $S(G)=M[J_{n_{1}}-I_{n_{1}},J_{n_{2}}-I_{n_{2}},\ldots,J_{n_{k}}-I_{n_{k}},-1].$Therefore from Theorem \ref{thm1} (iii) we are done. 	
\end{proof}
\begin{cor}\rm{\cite{oenergy}}
The energy of the complete multipartite graph $G= K_{n_{1},n_{2},\ldots,n_{k}}$ is $2\lambda_{1}(A(G))$. 	
\end{cor}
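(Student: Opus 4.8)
The plan is to realize $A(G)$ as an instance of the partitioned matrix $M[M_{1},\dots,M_{k},a]$ of Theorem~\ref{thm1} and then invoke part (ii). First I would write down the block structure of the adjacency matrix of $K_{n_{1},\dots,n_{k}}$: since no two vertices in the same part are adjacent while every pair of vertices from distinct parts is adjacent, the $i$th diagonal block is the zero matrix $0_{n_{i}}$ and each off-diagonal $(i,j)$ block is the all-ones matrix $J_{n_{i}\times n_{j}}$. Hence
\[
A(G)=M[0_{n_{1}},0_{n_{2}},\dots,0_{n_{k}},1],
\]
with $a=1$ and $M_{i}=0_{n_{i}}$ for each $i$.

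Next I would check that these blocks satisfy the standing hypotheses of the section. Each $M_{i}=0_{n_{i}}$ is real symmetric, has zero trace, and satisfies $M_{i}\mathbf{1}_{n_{i}}=0\cdot\mathbf{1}_{n_{i}}$, so $r_{i}=0\ge 0$ for every $i$; also $a=1\neq 0$. In particular $\tfrac{r_{i}}{n_{i}}=0$ for all $i$, so the extremal ratios $\tfrac{r_{1}}{n_{1}}$ and $\tfrac{r_{k}}{n_{k}}$ singled out before the theorem are both $0$.

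To apply part (ii) I would verify its two conditions. The matrix $M_{i}=0_{n_{i}}$ has every eigenvalue equal to $0$, so it has no positive eigenvalue; thus the requirement ``$M_{i}$ has at most one positive eigenvalue, namely $r_{i}$'' holds in its weakest form (with $r_{i}=0$). Moreover $-a+\tfrac{r_{1}}{n_{1}}=-1+0=-1\le 0$. Both hypotheses of Theorem~\ref{thm1}(ii) are therefore met, and it yields $\mathcal{E}(A(G))=2\lambda_{1}(A(G))$, which is exactly the claim.

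I do not expect a genuine obstacle here: the entire content is recognizing the adjacency matrix as the degenerate case $M_{i}=0$ of the partitioned family and confirming the sign condition $-a+\tfrac{r_{1}}{n_{1}}\le 0$, just as in the proof of Corollary~\ref{cor} for the reciprocal distance matrix. The only point warranting a word of care is the phrase ``at most one positive eigenvalue, namely $r_{i}$'': since $r_{i}=0$ is not itself positive, one should simply observe that the zero matrix has no positive eigenvalue at all, so the condition is satisfied.
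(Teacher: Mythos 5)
Your proposal is correct and follows exactly the paper's route: the paper likewise identifies $A(G)=M[\mathbf{0}_{n_{1}},\mathbf{0}_{n_{2}},\ldots,\mathbf{0}_{n_{k}},1]$ and cites Theorem~\ref{thm1}(ii), though it omits the verification details you supply. Your explicit check of the hypotheses—in particular the observation that the zero matrix has no positive eigenvalue, so the ``at most one positive eigenvalue'' condition holds vacuously, and that $-a+\tfrac{r_{1}}{n_{1}}=-1\le 0$—is a faithful filling-in of what the paper leaves implicit.
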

\begin{proof}
The adjacency matrix of $G$ is $M=M[\textbf{0}_{n_{1}},\textbf{0}_{n_{2}},\ldots,\textbf{0}_{n_{k}},1]$, Where $\textbf{0}_{n_{i}}$ is a null matrix of order $n_{i}$. Therefore by Theorem \ref{thm1} (ii), the corollary follows. 
\end{proof}
\begin{cor}
The complementary distance energy of  $G=K_{n_{1},n_{2},\ldots,n_{k}}$ is $2\lambda_{1}(CD(G))$.
\end{cor}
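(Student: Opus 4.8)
The plan is to recognise the complementary distance matrix $CD(G)$ as an instance of the partitioned matrix $M[M_{1},\ldots,M_{k},a]$ and then to invoke Theorem \ref{thm1}(ii), exactly as was done for the reciprocal distance matrix in Corollary \ref{cor}. Recall that the complementary distance matrix of a connected graph of diameter $D$ has $ij$th entry $1+D-d(v_{i},v_{j})$ for $i\neq j$ and $0$ on the diagonal. For the complete multipartite graph $G=K_{n_{1},\ldots,n_{k}}$ (with at least one part of size $\ge 2$) the diameter is $D=2$, since two vertices in distinct parts are adjacent while two vertices in the same part are non-adjacent but share a common neighbour.

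First I would write the entries explicitly. A pair of vertices in the same part contributes $1+2-2=1$, and a pair in different parts contributes $1+2-1=2$. Hence each diagonal block indexed by the $i$th part equals $J_{n_{i}}-I_{n_{i}}$, and every off-diagonal block is the constant $2$ times an all-ones block, so that
$$CD(G)=M[J_{n_{1}}-I_{n_{1}},\,J_{n_{2}}-I_{n_{2}},\,\ldots,\,J_{n_{k}}-I_{n_{k}},\,2].$$
Next I would verify the hypotheses of Theorem \ref{thm1}(ii) with $a=2$ and $M_{i}=J_{n_{i}}-I_{n_{i}}$. Each $M_{i}$ satisfies $M_{i}\mathbf{1}_{n_{i}}=(n_{i}-1)\mathbf{1}_{n_{i}}$, so $r_{i}=n_{i}-1\ge 0$, and $trace(M_{i})=0$. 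The eigenvalues of $J_{n_{i}}-I_{n_{i}}$ are $n_{i}-1$ (simple) and $-1$ (with multiplicity $n_{i}-1$), so each $M_{i}$ has exactly one positive eigenvalue, namely $r_{i}$. Finally, under the labelling convention $\dfrac{r_{1}}{n_{1}}=\displaystyle\max_{1\le i\le k}\dfrac{r_{i}}{n_{i}}$ one computes $-a+\dfrac{r_{1}}{n_{1}}=-2+\Big(1-\dfrac{1}{n_{1}}\Big)=-1-\dfrac{1}{n_{1}}\le 0$. All conditions of case (ii) hold, and the theorem immediately yields $\mathcal{E}(CD(G))=2\lambda_{1}(CD(G))$, which is the assertion.

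I do not expect a genuine obstacle here: once the diameter is pinned at $2$ the matrix form is forced, and the verification is parallel in spirit to the proof of Corollary \ref{cor}. The only point requiring mild care is the convention adopted for the complementary distance matrix, i.e.\ the additive constant $1+D$; with the standard convention used above the off-diagonal constant $a=2$ strictly exceeds every $r_{i}/n_{i}=1-\tfrac{1}{n_{i}}<1$, so the key inequality $-a+\tfrac{r_{1}}{n_{1}}\le 0$ holds automatically and the conclusion follows without further case analysis.
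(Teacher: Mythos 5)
Your proposal is correct and follows essentially the same route as the paper: identify $CD(G)=M[J_{n_{1}}-I_{n_{1}},\ldots,J_{n_{k}}-I_{n_{k}},2]$ and apply Theorem \ref{thm1}(ii) after checking $-2+\tfrac{r_{i}}{n_{i}}\le 0$. Your write-up is in fact slightly more careful than the paper's: you explicitly verify that each block $J_{n_{i}}-I_{n_{i}}$ has exactly one positive eigenvalue $r_{i}=n_{i}-1$ (a hypothesis of case (ii) the paper leaves tacit), and your arithmetic $-2+\tfrac{n_{i}-1}{n_{i}}=-1-\tfrac{1}{n_{i}}$ corrects a harmless sign slip in the paper, which writes $-1+\tfrac{1}{n_{i}}$.
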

\begin{proof}
	The complementary distance matrix \cite{cd} of the complete multipartite graph $G$  is $M[J_{n_{1}}-I_{n_{1}},J_{n_{2}}-I_{n_{2}},\ldots,J_{n_{k}}-I_{n_{k}},2]$. Since $-2+\dfrac{r_{i}}{n_{i}}=-2+\dfrac{n_{i}-1}{n_{i}}=-1+\dfrac{1}{n_{i}}\le 0$ for all $i$, from Theorem \ref{thm1} (ii), we get $\mathcal{E}_{CD}(G)=2\lambda_{1}(CD(G)).$
\end{proof}
\begin{cor}
	The reciprocal complementary distance energy of  $G=K_{n_{1},n_{2},\ldots,n_{k}}$ is $2(n-k)$, where $n=n_{1}+n_{2}+\ldots+n_{k}$ and $n_{1}\ge n_{2}\ge\ldots\ge n_{k}\ge2$.
\end{cor}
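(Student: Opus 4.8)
The plan is to mirror exactly the reasoning used for the distance and complementary distance corollaries, reducing the claim to a single application of Theorem \ref{thm1}(i). First I would write the reciprocal complementary distance matrix $RCD(G)$ in the partitioned form demanded by Theorem \ref{thm1}. Since $G=K_{n_1,\ldots,n_k}$ has diameter $D=2$, any two distinct vertices lie at distance $1$ (different parts) or $2$ (same part). With the usual convention that the $ij$-entry of $RCD(G)$ equals $\tfrac{1}{1+D-d(v_i,v_j)}$, the within-part entries are $\tfrac{1}{1+2-2}=1$ and the between-part entries are $\tfrac{1}{1+2-1}=\tfrac12$. Hence $RCD(G)=M[J_{n_1}-I_{n_1},\ldots,J_{n_k}-I_{n_k},\tfrac12]$, which has the same diagonal blocks as the complementary distance matrix but off-diagonal constant $a=\tfrac12$ rather than $a=2$.

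Next I would verify the hypotheses of case (i) of Theorem \ref{thm1}. Each block $M_i=J_{n_i}-I_{n_i}$ satisfies $M_i\mathbf{1}_{n_i}=(n_i-1)\mathbf{1}_{n_i}$ and $trace(M_i)=0$, so $r_i=n_i-1$ and $\tfrac{r_i}{n_i}=1-\tfrac{1}{n_i}$. Because this ratio is increasing in $n_i$ and $n_1\ge\cdots\ge n_k$, the minimum is attained at $i=k$, consistent with the convention $\tfrac{r_k}{n_k}=\min_i\tfrac{r_i}{n_i}$ in the theorem. The constant $a=\tfrac12$ is positive, and since $n_k\ge2$ we obtain $-a+\tfrac{r_k}{n_k}=\tfrac12-\tfrac{1}{n_k}\ge0$. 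Thus case (i) applies and yields $\mathcal{E}(RCD(G))=\sum_{i=1}^{k}\mathcal{E}(J_{n_i}-I_{n_i})$.

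Finally I would evaluate each block energy. The spectrum of $J_{n_i}-I_{n_i}$ is $n_i-1$ (simple) together with $-1$ of multiplicity $n_i-1$, so $\mathcal{E}(J_{n_i}-I_{n_i})=(n_i-1)+(n_i-1)=2(n_i-1)$. Summing over the blocks gives $\mathcal{E}(RCD(G))=\sum_{i=1}^{k}2(n_i-1)=2(n-k)$, which is the asserted value.

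The only genuine obstacle here is conceptual rather than computational: pinning down the correct normalisation in the definition of $RCD(G)$ and confirming that every complete multipartite graph with $n_k\ge2$ has diameter exactly $2$, so that same-part pairs are truly at distance $2$ and the entry $a=\tfrac12$ is forced. Once this is settled, the single point of departure from the complementary distance corollary is that the weaker coupling $a=\tfrac12$ (together with $n_k\ge2$) places the reduced matrix $M''$, equivalently $aJ_{k\times k}-aI_k+D$, in the positive semidefinite regime of case (i) rather than the single-positive-eigenvalue regime of case (ii); all remaining steps are the same block-energy bookkeeping already used in the earlier corollaries.
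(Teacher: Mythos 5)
Your proof is correct and follows essentially the same route as the paper: write $RCD(G)=M[J_{n_1}-I_{n_1},\ldots,J_{n_k}-I_{n_k},\tfrac12]$, check that $a=\tfrac12>0$ and $-a+\tfrac{r_k}{n_k}=\tfrac12-\tfrac{1}{n_k}\ge 0$ (using $n_k\ge 2$) so that Theorem \ref{thm1}(i) applies, and sum the block energies $\mathcal{E}(J_{n_i}-I_{n_i})=2(n_i-1)$ to get $2(n-k)$. The extra detail you supply (deriving the entries from the diameter-$2$ structure and exhibiting the spectrum of $J_{n_i}-I_{n_i}$) is just an expansion of steps the paper leaves implicit.
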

\begin{proof}
	The reciprocal complementary distance matrix \cite{cd} of the complete multipartite graph $G$  is $M[J_{n_{1}}-I_{n_{1}},J_{n_{2}}-I_{n_{2}},\ldots,J_{n_{k}}-I_{n_{k}},1/2]$. Since $-\dfrac{1}{2}+\dfrac{r_{i}}{n_{i}}=-\dfrac{1}{2}+\dfrac{n_{i}-1}{n_{i}}=\dfrac{1}{2}-\dfrac{1}{n_{i}}\ge 0$ for all $i$ and $\mathcal{E}(J_{n_{i}}-I_{n_{i}})=2(n_{i}-1)$, from Theorem \ref{thm1} (i), we get $\mathcal{E}_{RCD}(G)=2(n-k).$
\end{proof}
\section{Extremal complete multipartite graphs  with respect to reciprocal distance energy }
In this section, we show that among all complete $k$-partite graph on $n$ vertices, the complete split graph $CS(n,k-1)$ has minimum reciprocal distance energy and the reciprocal distance energy is maximum for the Turan graph $T(n,k)$.\\
We need the following Cauchy's interlace theorem.
\begin{lemma}\rm{\cite{book}}\label{bo}
Let $M$ be a Hermitian matrix of order $n$ and let $N$ be a principal submatrix of $M$ of order $k$. Then $\lambda_{n-k+i}(M)\le \lambda_{i}(N)\le \lambda_{i}(M)$ for $i=1,2,\ldots,k$.  	
\end{lemma}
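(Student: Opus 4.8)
The plan is to deduce both interlacing inequalities from the Courant--Fischer variational characterization of the eigenvalues of a Hermitian matrix. Recall that for a Hermitian matrix $A$ of order $m$, writing $R_{A}(x)=(x^{*}Ax)/(x^{*}x)$ for the Rayleigh quotient, one has the two dual formulas $\lambda_{i}(A)=\max_{\dim S=i}\min_{0\ne x\in S}R_{A}(x)$ and $\lambda_{i}(A)=\min_{\dim S=m-i+1}\max_{0\ne x\in S}R_{A}(x)$, the maxima and minima being over subspaces of $\mathbb{C}^{m}$ of the indicated dimension. Since simultaneously permuting the rows and columns of $M$ is a unitary similarity that alters neither $Spec(M)$ nor $Spec(N)$, I may assume without loss of generality that $N$ is the leading $k\times k$ principal submatrix, so that $M=\begin{bmatrix} N & B\\ B^{*} & C\end{bmatrix}$. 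The key device is the isometric embedding $y\mapsto\tilde y=\begin{bmatrix} y\\ 0\end{bmatrix}$ of $\mathbb{C}^{k}$ into $\mathbb{C}^{n}$, which satisfies $\tilde y^{*}M\tilde y=y^{*}Ny$ and $\|\tilde y\|=\|y\|$, so that $R_{M}(\tilde y)=R_{N}(y)$; consequently every $j$-dimensional subspace $S\subseteq\mathbb{C}^{k}$ lifts to a $j$-dimensional subspace $\tilde S\subseteq\mathbb{C}^{n}$ on which the two Rayleigh quotients agree.

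For the upper bound $\lambda_{i}(N)\le\lambda_{i}(M)$ I would use the max--min formula: given any $i$-dimensional $S\subseteq\mathbb{C}^{k}$, its lift $\tilde S$ is $i$-dimensional in $\mathbb{C}^{n}$ and $\min_{0\ne y\in S}R_{N}(y)=\min_{0\ne x\in\tilde S}R_{M}(x)\le\lambda_{i}(M)$; taking the maximum over all such $S$ gives $\lambda_{i}(N)\le\lambda_{i}(M)$. For the lower bound $\lambda_{n-k+i}(M)\le\lambda_{i}(N)$ I would instead use the min--max formula $\lambda_{i}(N)=\min_{\dim S=k-i+1}\max_{0\ne y\in S}R_{N}(y)$: a subspace $S\subseteq\mathbb{C}^{k}$ of dimension $k-i+1$ lifts to $\tilde S$ of the same dimension, and since $k-i+1=n-(n-k+i)+1$ this is precisely the dimension over which the min--max formula for $\lambda_{n-k+i}(M)$ ranges, whence $\lambda_{n-k+i}(M)\le\max_{0\ne y\in S}R_{N}(y)$; minimizing over $S$ yields the bound.

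Combining the two inequalities gives $\lambda_{n-k+i}(M)\le\lambda_{i}(N)\le\lambda_{i}(M)$ for $1\le i\le k$, as claimed. The one point that must be handled carefully---and what I regard as the main obstacle---is the index and dimension bookkeeping in the lower bound: one has to match the $(k-i+1)$-dimensional subspaces of $\mathbb{C}^{k}$ to the eigenvalue index $n-k+i$ of $M$ through the identity $k-i+1=n-(n-k+i)+1$, so that the two min--max expressions genuinely range over subspaces of equal dimension and are directly comparable. Should one wish to avoid the variational machinery entirely, an alternative is to reduce to the single-deletion case $k=n-1$, where the interlacing $\lambda_{i+1}(M)\le\lambda_{i}(N)\le\lambda_{i}(M)$ can be read off from a secular-equation or continuity analysis of the bordered matrix, and then to iterate this step $n-k$ times to recover the general statement.
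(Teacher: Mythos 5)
Your proof is correct: both directions of the Courant--Fischer argument are carried out with the right dual formulas, the isometric-lift device is exactly what makes the Rayleigh quotients of $N$ comparable to those of $M$, and the index bookkeeping $k-i+1=n-(n-k+i)+1$ in the lower bound is handled properly. Note, however, that the paper offers no proof of this lemma at all --- it is quoted as a known result from the cited reference (Horn and Johnson, \emph{Matrix Analysis}) --- and your argument is essentially the standard proof given in that reference, so there is nothing in the paper to contrast it with.
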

\begin{lemma}\label{lemma5}
Let $G_{1}=K_{n_{1},n_{2},\ldots,n_{s},n_{s+1},\ldots, n_{k}}$ and $G_{2}=K_{n_{1},n_{2},\ldots,n_{s}-1,n_{s+1}+1,\ldots, n_{k}}$, where $n_{s}-n_{s+1}\ge 2$. Then $\lambda_{1}(RD(G_{2}))>\lambda_{1}(RD(G_{1}))$.	
\end{lemma}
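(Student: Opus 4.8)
The plan is to exploit the fact, established in Corollary~\ref{cor}, that the reciprocal distance matrix of a complete multipartite graph has a single positive eigenvalue, so that $\lambda_{1}(RD(G))$ is the Perron root and is attained by a positive eigenvector $x$. Since permuting the vertices inside one part is an automorphism of $RD(G)$ and the Perron eigenvector is unique up to scaling, $x$ is constant on each part; write $x_{i}$ for its value on the $i$th part, and set $\lambda=\lambda_{1}(RD(G_{1}))$ and $S=\sum_{j}n_{j}x_{j}>0$. Reading the eigenvalue equation at a vertex of part $i$, namely $\lambda x_{i}=\tfrac12(n_{i}-1)x_{i}+\sum_{j\neq i}n_{j}x_{j}$, and substituting $\sum_{j\neq i}n_{j}x_{j}=S-n_{i}x_{i}$, gives the closed form
\[ x_{i}=\frac{2S}{2\lambda+n_{i}+1}, \]
so each $x_{i}$ is positive and strictly decreasing in the part size $n_{i}$.

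Next I would use the Perron vector $x$ of $G_{1}$ as a test vector in the Rayleigh quotient for $G_{2}$. The two graphs share the same vertex set, and $G_{2}$ is obtained by moving a single vertex $w$ from part $s$ to part $s+1$; hence $RD(G_{2})-RD(G_{1})$ is supported on the rows and columns meeting $w$: the $n_{s}-1$ reciprocal distances from $w$ to the remaining vertices of old part $s$ rise from $\tfrac12$ to $1$, while the $n_{s+1}$ reciprocal distances from $w$ to old part $s+1$ fall from $1$ to $\tfrac12$. Because $x$ takes the value $x_{s}$ at $w$, a direct evaluation of the quadratic form yields
\[ x^{T}\bigl(RD(G_{2})-RD(G_{1})\bigr)x=(n_{s}-1)x_{s}^{2}-n_{s+1}x_{s}x_{s+1}. \]
As $x^{T}RD(G_{1})x=\lambda\,x^{T}x$ and $\lambda_{1}(RD(G_{2}))\ge x^{T}RD(G_{2})x/x^{T}x$, it suffices to show this difference is strictly positive, i.e. that $(n_{s}-1)x_{s}>n_{s+1}x_{s+1}$.

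Substituting the closed form for $x_{s}$ and $x_{s+1}$ and clearing the positive denominators reduces the target inequality to
\[ 2\lambda\,(n_{s}-n_{s+1}-1)+(n_{s}-2n_{s+1}-1)>0. \]
Writing $t=n_{s}-n_{s+1}-1$, the hypothesis $n_{s}-n_{s+1}\ge 2$ gives $t\ge 1$, and the left-hand side rearranges to $t(2\lambda+1)-n_{s+1}$; thus it is enough to prove $2\lambda+1>n_{s+1}$, that is $\lambda>\tfrac12(n_{s+1}-1)$. This bound is immediate from the eigenvalue equation at part $s+1$: since $k\ge 2$ the term $\sum_{j\neq s+1}n_{j}x_{j}$ is strictly positive, whence $\lambda x_{s+1}>\tfrac12(n_{s+1}-1)x_{s+1}$ and $x_{s+1}>0$ force $\lambda>\tfrac12(n_{s+1}-1)$. (Alternatively, Cauchy's interlace theorem, Lemma~\ref{bo}, applied to the principal submatrix $\tfrac12(J_{n_{s+1}}-I_{n_{s+1}})$ of $RD(G_{1})$, together with irreducibility, gives the same strict inequality.) Combining the steps yields $x^{T}RD(G_{2})x>\lambda\,x^{T}x$ and hence $\lambda_{1}(RD(G_{2}))>\lambda_{1}(RD(G_{1}))$.

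The closed form for $x_{i}$ and the evaluation of the perturbed quadratic form are routine; the one genuine point is the final reduction, where the term $n_{s}-2n_{s+1}-1$ may well be negative, so the argument must extract a clean strict lower bound on $\lambda$ (here $\lambda>\tfrac12(n_{s+1}-1)$) that, together with $t\ge 1$, absorbs it. The hypothesis $n_{s}-n_{s+1}\ge 2$ is precisely what secures $t\ge 1$ and thereby drives the strict inequality.
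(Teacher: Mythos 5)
Your proposal is correct, and its skeleton coincides with the paper's own proof: both extract the Perron eigenvector $x$ of $RD(G_{1})$, argue by symmetry that it is constant on parts, derive the closed form $x_{i}=\tfrac{2S}{2\lambda+n_{i}+1}$ from the eigenvalue equation, and use $x$ as a Rayleigh test vector for $RD(G_{2})$, arriving at exactly the same scalar inequality $2\lambda(n_{s}-n_{s+1}-1)+n_{s}-2n_{s+1}-1>0$. Where you genuinely diverge is in how this inequality is closed. The paper bounds $2\lambda(n_{s}-n_{s+1}-1)\ge 2\lambda$ and then invokes Cauchy interlacing (its Lemma \ref{bo}) on the principal submatrix $RD(K_{n_{s},n_{s+1}})$, computing the explicit spectral radius $\tfrac14\bigl(n_{s}+n_{s+1}-2+\sqrt{n_{s}^{2}+14n_{s}n_{s+1}+n_{s+1}^{2}}\bigr)$ to get the lower bound $\lambda>\tfrac12(n_{s}+n_{s+1}-1)$, which then absorbs the possibly negative term $n_{s}-2n_{s+1}-1$. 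You instead rearrange the left side as $t(2\lambda+1)-n_{s+1}$ with $t=n_{s}-n_{s+1}-1\ge1$, and observe that the much weaker bound $\lambda>\tfrac12(n_{s+1}-1)$ suffices, which falls out of the eigenvalue equation at a vertex of part $s+1$ in one line (the off-part sum is strictly positive). This is a real simplification: it eliminates both the interlacing lemma and the computation of the bipartite reciprocal-distance spectral radius, using only positivity of the Perron vector. In effect you noticed that the paper proves a stronger bound on $\lambda$ than its own algebra requires; after the common reduction, both arguments need only $2\lambda+1>n_{s+1}$.
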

\begin{proof}
Let  $V_{1}, V_2, V_{3},\ldots, V_{k}$ denote the vertex partition sets of the complete multipartite graph $G_{1}$. Let $V_{i}=\{v_{i1},v_{i2},\ldots, v_{in_{i}}\}$. Then form the definition of the complete multipartite graph, the vertices $v_{i1}, v_{i2},\ldots,v_{in_{i}}$ are symmetric. Since the reciprocal distance matrix of $G_{1}$ is non-negative, it follows from Perron-Frobenius theory that $\lambda_{1}(RD(G_{1}))$ is  simple and there exists an unit eigenvector $X$ (say) corresponding to the eigenvalue $\lambda_{1}(RD(G_{1}))$ with all its entries being positive. Let $x_{i1},x_{i2},\ldots x_{in_{i}}$ be the components of $X$ corresponding to the vertices $v_{i1}, v_{i2}, \ldots, v_{in_{i}}$. Suppose $x_{ij}\neq x_{ik}$ for $1\le j, k\le n_{i}$. Let $X^{\prime}$ be the vector obtained from $X$ by interchanging the components $x_{ij}$ and $x_{ik}$. Then the vectors $X$ and $X^{\prime}$ are linearly independent and $RD(G_{1})X^{\prime}=\lambda_{1}(RD(G_{1}))X^{\prime}$, because $x_{ij}\neq x_{ik}$ and the vertices $v_{i1}, v_{i2},\ldots,v_{in_{i}}$ are symmetric. Therefore multiplicity of $\lambda_{1}(RD(G_{1}))$ is at least 2, a contradiction. Hence $x_{i1}=x_{i2}=\ldots=x_{in_{i}}$ for all $1\le i\le k$. Let $n=n_{1}+n_{2}+\ldots+n_{k}$. From Rayleigh quotient inequality, we have
\begin{align}\label{ieq1}\lambda_{1}(RD(G_{2}))&\ge X^{T}RD(G_{2})X\notag\\
&=X^{T}\big[\dfrac{1}{2}(J_{n\times n}-I_{n})+\dfrac{1}{2}A(G_{2})\big]X\notag\\
&=X^{T}\dfrac{1}{2}(J_{n\times n}-I_{n})X+\dfrac{1}{2}X^{T}A(G_{2})X\notag\\
&=X^{T}\dfrac{1}{2}(J_{n\times n}-I_{n})X+\sum_{v_{ip}v_{jq}\in E(G_{2})}x_{ip}x_{jq}\notag\\
&=X^{T}\dfrac{1}{2}(J_{n\times n}-I_{n})X+\sum_{v_{ip}v_{jq}\in E(G_{1})}x_{ip}x_{jq}-\sum_{j=1}^{n_{s+1}}x_{sn_{s}}x_{(s+1)j}+\sum_{j=1}^{n_{s}-1}x_{sn_{s}}x_{sj}\notag\\
&=X^{T}\dfrac{1}{2}(J_{n\times n}-I_{n})X+\dfrac{1}{2}X^{T}A(G_{1})X-n_{s+1}x_{sn_s}x_{(s+1)1}+(n_{s}-1)x_{sn_s}x_{s1}\notag\\
&=X^{T}RD(G_{1})X-n_{s+1}x_{sn_s}x_{(s+1)1}+(n_{s}-1)x_{sn_s}x_{s1}\notag\\
&=\lambda_{1}(RD(G_{1}))-n_{s+1}x_{sn_s}x_{(s+1)1}+(n_{s}-1)x_{sn_s}x_{s1}.
\end{align}
Now, consider 
\begin{align*}\lambda_{1}(RD(G_{1}))x_{i1}&=\displaystyle\sum_{\underset{p\neq i}{p=1}}^{k}\sum_{q=1}^{n_{p}} x_{pq}+\dfrac{1}{2}\displaystyle\sum_{q=2}^{n_{i}}x_{iq}\\
&=\sum_{\underset{p\neq i}{p=1}}^{k}n_{p}x_{p1}+\dfrac{1}{2}(n_{i}-1)x_{i1}\\
&=\sum_{p=1}^{k}n_{p}x_{p1}-\dfrac{(n_{i}+1)}{2}x_{i1}.
\end{align*}
Therefore, 
\begin{equation}\label{ieq2}x_{i1}=\dfrac{2X}{2\lambda_{1}(RD(G_{1}))+n_{i}+1}, \text{where}~ X=\sum_{p=1}^{k}n_{p}x_{p1}.
\end{equation}
Using equation (\ref{ieq2}) in (\ref{ieq1}), we get $\lambda_{1}(RD(G_{2}))-\lambda_{1}(RD(G_{1}))$
\begin{eqnarray}\label{ieq3}
	 &\ge&2Xx_{sn_{s}}\left(\dfrac{-n_{s+1}}{2\lambda_{1}(RD(G_{1}))+n_{s+1}+1}+\dfrac{n_{s}-1}{2\lambda_{1}(RD(G_{1}))+n_{s}+1}\right)\notag\\[4mm]
&=&2Xx_{sn_{s}}\left(\dfrac{2\lambda_{1}(RD(G_{1}))(n_{s}-n_{s+1})-2\lambda_{1}(RD(G_{1}))+n_{s}-2n_{s+1}-1}{(2\lambda_{1}(RD(G_{1}))+n_{s+1}+1)(2\lambda_{1}(RD(G_{1}))+n_{s}+1)}\right)\notag\\[4mm]
&\ge&2Xx_{sn_{s}}\left(\dfrac{2\lambda_{1}(RD(G_{1}))+n_{s}-2n_{s+1}-1}{(2\lambda_{1}(RD(G_{1}))+n_{s+1}+1)(2\lambda_{1}(RD(G_{1}))+n_{s}+1)}\right)
\end{eqnarray}
Since $RD(K_{n_{s},n_{s+1}})$ is the principal submatrix of $RD(G_{1})$, by Lemma \ref{bo}, we have 
\begin{eqnarray}\label{ieq4}
\lambda_{1}(RD(G_{1}))&\ge& \lambda_{1}(RD(K_{n_{s},n_{s+1}}))\notag\\
&=&\dfrac{1}{4}\left(n_{s}+n_{s+1}-2+\sqrt{n_{s}^2+14n_{s}n_{s+1}+n_{s+1}^2}\right)\notag\\
&>&\dfrac{1}{2}(n_{s}+n_{s+1}-1)
\end{eqnarray}
Using equation (\ref{ieq4}) in (\ref{ieq3}), we get $\lambda_{1}(RD(G_{2}))-\lambda_{1}(RD(G_{1}))$
\begin{eqnarray*}
&>&2Xx_{sn_{s}}\left(\dfrac{2n_{s}-n_{s+1}-2}{(2\lambda_{1}(RD(G_{1}))+n_{s+1}+1)(2\lambda_{1}(RD(G_{1}))+n_{s}+1)}\right)\notag\\
&>&0.	
\end{eqnarray*} 
Thus $\lambda_{1}(RD(G_{2}))>\lambda_{1}(RD(G_{1}))$. Hence the proof of the theorem.
\end{proof}
\begin{theorem}
Let $G$ be a complete $k$-partite graph on $n$ vertices. Then 
$$\mathcal{E}_{RD}(CS(n,k-1))\le \mathcal{E}_{RD}(G)\le \mathcal{E}_{RD}(T(n,k)).$$
Moreover, the left equality holds if and only if $G\cong CS(n,k-1)$ and the right equality holds if and only if $G\cong T(n,k)$.
\end{theorem}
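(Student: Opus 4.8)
The plan is to collapse the whole problem onto a single eigenvalue. By Corollary \ref{cor}, every complete $k$-partite graph $G$ satisfies $\mathcal{E}_{RD}(G)=2\lambda_{1}(RD(G))$, so it suffices to show that $\lambda_{1}(RD(\cdot))$ is minimized by $CS(n,k-1)$ and maximized by $T(n,k)$, with the stated equality cases; multiplying through by $2$ at the end recovers the energy inequalities. Since a complete multipartite graph is determined up to isomorphism by the multiset of its part sizes, I may freely reorder parts and so read Lemma \ref{lemma5} as the following monotonicity principle: if $G$ has two parts of sizes $a$ and $b$ with $a-b\ge 2$, then the balancing vertex shift replacing them by parts of sizes $a-1$ and $b+1$ strictly increases $\lambda_{1}(RD)$, and conversely the reverse unbalancing shift strictly decreases it.

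For the upper bound I would show that any $G\not\cong T(n,k)$ can be carried to $T(n,k)$ by a finite chain of balancing shifts, each strictly raising $\lambda_{1}(RD)$. Recall that $T(n,k)$ is exactly the complete $k$-partite graph all of whose part sizes lie in $\{q,q+1\}$ for $n=kq+r$; equivalently, it is the unique complete $k$-partite graph in which no two parts differ by $2$ or more. Hence if $G\not\cong T(n,k)$ there are two parts differing by at least $2$, and Lemma \ref{lemma5} applies. To force termination I would track the integer potential $\Phi(G)=\sum_{i=1}^{k}n_{i}^{2}$: a balancing shift sending parts $(a,b)$ with $a-b\ge 2$ to $(a-1,b+1)$ changes $\Phi$ by $2(b-a+1)<0$, so $\Phi$ strictly decreases at each step. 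Being a positive integer, $\Phi$ can decrease only finitely often, and the process halts precisely when no two parts differ by $2$ or more, that is, at $T(n,k)$. Chaining the strict inequalities of Lemma \ref{lemma5} then gives $\lambda_{1}(RD(G))<\lambda_{1}(RD(T(n,k)))$ whenever $G\not\cong T(n,k)$, which is the right inequality together with its equality characterization.

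For the lower bound I would run the same machine in reverse. The graph $CS(n,k-1)=K_{n-k+1,1,\ldots,1}$ is the unique complete $k$-partite graph with at most one part of size $\ge 2$, so if $G\not\cong CS(n,k-1)$ then $G$ has two parts of sizes $a\ge b\ge 2$. Applying Lemma \ref{lemma5} with the roles interchanged—take the unbalancing shift sending $(a,b)$ to $(a+1,b-1)$, whose hypothesis holds because the new larger part exceeds the smaller by $(a+1)-(b-1)=a-b+2\ge 2$—strictly decreases $\lambda_{1}(RD)$, and it keeps every part nonempty since $b\ge 2$. Here $\Phi$ strictly increases at each step and is bounded above by its value $(n-k+1)^{2}+(k-1)$ at $CS(n,k-1)$, the unique maximizer of $\Phi$ among partitions of $n$ into $k$ positive parts, so iteration terminates, necessarily at the configuration with at most one part of size $\ge 2$, namely $CS(n,k-1)$. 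This yields $\lambda_{1}(RD(CS(n,k-1)))<\lambda_{1}(RD(G))$ for $G\not\cong CS(n,k-1)$, giving the left inequality and its equality case.

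The spectral monotonicity is the routine ingredient, already packaged in Lemma \ref{lemma5}; the step needing the most care is the combinatorial reachability argument—verifying that from any non-extremal part-size sequence an admissible shift always exists (two parts differing by $\ge 2$ for balancing, two parts of size $\ge 2$ for unbalancing) and that the monotone integer potential $\Phi$ forces termination at exactly the extremal graph. The strictness of each individual shift is what delivers both \emph{iff} statements, since any $G$ not isomorphic to the relevant extremal graph needs at least one strict move to reach it. The only degenerate situation, $n=k$, is immediate: then $CS(n,k-1)\cong T(n,k)\cong K_{k}\cong G$ and all quantities coincide.
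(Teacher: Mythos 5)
Your proposal is correct and takes essentially the same route as the paper: both reduce the energy statement to the spectral radius via Corollary \ref{cor} (so $\mathcal{E}_{RD}(G)=2\lambda_{1}(RD(G))$) and then iterate the vertex-shift of Lemma \ref{lemma5}, balancing part sizes to climb to $T(n,k)$ and unbalancing (applying the lemma in reverse, exactly as the paper does) to descend to $CS(n,k-1)$, with strictness of each shift giving the equality cases. Your explicit potential $\Phi=\sum_{i}n_{i}^{2}$ for termination, and your insistence on comparing \emph{arbitrary} pairs of parts rather than consecutive ones, merely formalize steps the paper treats informally (``repeating the process''); they are refinements of, not departures from, the paper's argument.
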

\begin{proof}
Let $G\cong K_{n_{1},n_{2},\ldots,n_{k}}$. Then $n=n_{1}+n_{2}+\ldots+n_{k}.$\\[2mm]	
Left inequality:   Suppose $n_{2}\ge2$. Then consider the graph $G_{1}\cong K_{n_{1}+1,n_{2}-1,\ldots,n_{k}}$. From Lemma \ref{lemma5}, we get 
$\lambda_{1}(RD(G))>\lambda_{1}(RD(G_{1}))$. Thus by Corollary \ref{cor}, we have $\mathcal{E}_{RD}(G)>\mathcal{E}_{RD}(G_{1})$. Now, if $n_{2}-1\ge 2$. Then 
by a similar argument, $\mathcal{E}_{RD}(G_{1})>\mathcal{E}_{RD}(G_{2})$, where $G_{2}\cong K_{n_{1}+2,n_{2}-2,\ldots,n_{k}}$. Thus $\mathcal{E}_{RD}(G)>\mathcal{E}_{RD}(G_{2})$. Repeating, $n_{2}-1$ times, we get $\mathcal{E}_{RD}(G)>\mathcal{E}_{RD}(K_{n_{1}+n_{2}-1,n_{3},\ldots,n_{k},1})$. Hence $\mathcal{E}_{RD}(G)>\mathcal{E}_{RD}(K_{n_{1}+n_{2}-1,n_{3},\ldots,n_{k},1})$ $>\mathcal{E}_{RD}(K_{n_{1}+n_{2}+n_{3}-2,n_{4},\ldots,n_{k},1,1})>\ldots>\mathcal{E}_{RD}(K_{n-k-1,1,\ldots,1,1})$. Therefore, $\mathcal{E}_{RD}(G)>CS(n,k-1)$.\\ [2mm]    
Right inequality: If $n_{s}-n_{s+1}\ge 2$ for some $1\le s< k$. Then by Lemma \ref{lemma5}, $\lambda_{1}(RD(G))<\lambda_{1}(RD(G_{1}))$, where $G_{1}\cong K_{n^{\prime}_{1},n^{\prime}_{2},\ldots,n_{s}^{\prime},n_{s+1}^{\prime},\ldots,n_{k}^{\prime}}$, $n_{i}^{\prime}=n_{i}$ for $i\neq s, s+1$, $n_{s}^{\prime}=n_{s}-1$ and $n_{s+1}^{\prime}=n_{s+1}+1$. Thus $\mathcal{E}_{RD}(G)<\mathcal{E}_{RD}(G_{1})$. Now, if $n^{\prime}_{t}-n^{\prime}_{t+1}\ge 2$ for some $1\le t< k$. Then by a similar argument, we get 
$\mathcal{E}_{RD}(G_{1})<\mathcal{E}_{RD}(G_{2})$, where $G_{2}\cong K_{n^{\prime\prime}_{1},n_{2}^{\prime\prime},\ldots,n_{k}^{\prime\prime}}$,    $n^{\prime\prime}_{i}=n^{\prime}_{i}$ for $i\neq t,t+1$, $n^{\prime\prime}_{t}=n^{\prime}_{t}-1$ and $n^{\prime\prime}_{t+1}=n^{\prime}_{t+1}+1$. So, $\mathcal{E}_{RD}(G)<\mathcal{E}_{RD}(G_{2})$.  Repeating the process until the difference of size of any two partitions is less than 2, we get $\mathcal{E}_{RD}(G)< \mathcal{E}_{RD}(T(n,k))$.
\end{proof}
\section{Reciprocal distance energy change of some complete multipartite graph due to edge deletion}
In this section, we study the change in reciprocal distance  energy of complete bipartite graph $K_{q,q}$ and the complete tripartite graph $K_{p,q,r}$ due to edge deletion.
\begin{lemma}{\rm\cite{book}}\label{cl}
Let $A$ and $B$ be two real symmetric matrices of same order such that $0\le A\le B$. Then $\lambda_{1}(A)\le \lambda_{1}(B)$. 
\end{lemma}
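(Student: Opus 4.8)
The plan is to establish the monotonicity of the largest eigenvalue under the Loewner order directly from the Rayleigh--Ritz variational characterization, which is the most transparent route and sidesteps any reliance on the fine structure of the eigenvectors. Recall that for a real symmetric matrix $M$ of order $n$ the top eigenvalue admits the description $\lambda_{1}(M)=\max_{\|x\|=1}x^{T}Mx$, with the maximum attained at a unit eigenvector of $M$ associated with $\lambda_{1}(M)$.

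The first step is to translate the hypothesis $A\le B$ into the quadratic-form inequality it actually encodes. By definition of the Loewner order, $B-A$ is positive semidefinite, so $x^{T}(B-A)x\ge 0$, that is $x^{T}Ax\le x^{T}Bx$, for every real vector $x$. I would note in passing that the side condition $0\le A$ is not needed for the conclusion; it is recorded only because the lemma will be invoked in that regime in the sequel.

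Next, let $u$ be a unit eigenvector of $A$ with $Au=\lambda_{1}(A)u$. Evaluating the Rayleigh quotient of $B$ at $u$ and chaining the two inequalities gives
$$\lambda_{1}(A)=u^{T}Au\le u^{T}Bu\le \max_{\|x\|=1}x^{T}Bx=\lambda_{1}(B),$$
which is exactly the assertion. Alternatively, the same bound falls out of Weyl's inequality (Lemma \ref{l2}): writing $B=A+(B-A)$ and taking $i=1$, $j=n$ in part (i) yields $\lambda_{1}(A)+\lambda_{n}(B-A)\le\lambda_{1}(B)$, and $\lambda_{n}(B-A)\ge 0$ precisely because $B-A$ is positive semidefinite.

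There is essentially no genuine obstacle here, since the statement is a standard fact. The only point that demands any care is the faithful reading of ``$A\le B$'' as a statement about quadratic forms rather than about entrywise comparison or about a termwise ordering of eigenvalues; once that is fixed, the argument reduces to a single application of the variational principle.
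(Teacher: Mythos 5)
Your argument correctly proves Loewner monotonicity of $\lambda_{1}$, but that is not the statement this lemma asserts, and the ``careful reading'' you insist on at the end is exactly the wrong one. The paper cites this result (without proof) from Horn and Johnson, and in that source --- and in the way the paper uses it --- ``$0\le A\le B$'' is an \emph{entrywise} comparison: $A$ has nonnegative entries and each entry of $A$ is at most the corresponding entry of $B$. The decisive evidence is the application in the paper's final theorem: there $A=RD(K_{p,q,r}\setminus\{e\})$ and $B=RD(K_{p,q,r})$, and $B-A$ is the symmetric matrix with $\tfrac12$ in the two positions indexed by the endpoints of the deleted edge and zeros elsewhere. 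Its eigenvalues are $\tfrac12$, $-\tfrac12$ and $0$, so $B-A$ is \emph{not} positive semidefinite; with $x=e_u-e_v$ one has $x^{T}Ax>x^{T}Bx$, so your Rayleigh-quotient chain breaks down, and your Weyl alternative fails too since $\lambda_{n}(B-A)=-\tfrac12<0$. A second tell is the hypothesis $0\le A$: you observed it is superfluous in the Loewner reading, which should have been a warning sign --- in the entrywise reading it is essential. For instance $A=\left(\begin{smallmatrix}0&-2\\-2&0\end{smallmatrix}\right)\le B=0$ entrywise, yet $\lambda_{1}(A)=2>0=\lambda_{1}(B)$.

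The proof the lemma actually needs is the Perron--Frobenius-type monotonicity argument, in which nonnegativity of the entries does real work: for any unit vector $x$, writing $|x|$ for the entrywise absolute value, nonnegativity of $A$ gives $x^{T}Ax\le |x|^{T}A|x|$; the entrywise inequality $A\le B$ together with $|x_{i}||x_{j}|\ge 0$ gives $|x|^{T}A|x|\le |x|^{T}B|x|$; and Rayleigh--Ritz gives $|x|^{T}B|x|\le\lambda_{1}(B)$. Maximizing over unit $x$ yields $\lambda_{1}(A)\le\lambda_{1}(B)$. Equivalently, one may argue $\lambda_{1}(A)=\rho(A)\le\rho(B)=\lambda_{1}(B)$, using that a symmetric matrix with nonnegative entries has spectral radius equal to its largest eigenvalue, together with the standard fact that the spectral radius of nonnegative matrices is monotone under entrywise domination. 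Only this entrywise version makes the paper's subsequent use of the lemma legitimate; the statement you proved, while true, cannot be invoked there.
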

\ni
Let $E=\left[\begin{array}{cccc}
0&a&bJ_{1\times p}&cJ_{1\times q}\\
a&0&cJ_{1\times p }&bJ_{1\times q}\\
b1_{p}&c1_{n}&b(J_{p\times p}-I_{p})&cJ_{p\times q}\\
c1_{q}&b1_{q}&cJ_{q \times p}&b(J_{q\times q}-I_{q})			
\end{array}\right],$ where $a$, $b$ and $c$ are real constants. In the following lemma, we give the spectrum of the matrix $E$.
\begin{lemma}\label{ld1}
The spectrum of the matrix $E$ consists of 	$-b$ with multiplicity $p+q-2$ and the four roots of the polynomial $t^4+(-bp-bq+2b)t^3+(b^2pq-c^2pq-2b^2p-2b^2q-c^2p-c^2q-a^2+b^2)t^2+(2b^3pq-2bc^2pq+a^2bp+a^2bq-2abcp-2abcq-b^3p-b^3q-bc^2p-bc^2q-2a^2b)t-a^2b^2pq+a^2c^2pq+2pqab^2c-2pqac^3+b^4pq-2pqb^2c^2+c^4pq+a^2b^2p+qa^2b^2-2ab^2cp-2ab^2cq-a^2b^2$.
\end{lemma}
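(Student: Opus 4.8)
The plan is to exploit the symmetry of $E$: the $p$ coordinates of the third block are mutually interchangeable, and so are the $q$ coordinates of the fourth block, because every interaction of these coordinates with the rest of the matrix occurs through an all-ones block. This lets me split $\mathbb{R}^{p+q+2}$ into an explicit $(p+q-2)$-dimensional eigenspace for $-b$ and a complementary $4$-dimensional $E$-invariant subspace that carries the four quartic roots.

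First I would produce the $-b$ eigenvectors directly. Take any vector supported on the $p$-block whose coordinates sum to zero, $x=(0,0,u,0)$ with $\mathbf{1}_p^{T}u=0$. Since the diagonal block is $b(J_p-I_p)$ and $(J_p-I_p)u=-u$, while each remaining block meets $u$ only through the inner product $\mathbf{1}_p^{T}u=0$, one gets $Ex=-bx$; this yields $p-1$ independent eigenvectors. The same computation on the $q$-block (using $b(J_q-I_q)$) gives $q-1$ more. Together these are $p+q-2$ mutually orthogonal eigenvectors for the eigenvalue $-b$.

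Next I would treat the orthogonal complement $W$, namely the vectors that are constant on each of the four cells $\{1\},\{2\},\text{($p$-block)},\text{($q$-block)}$; it has dimension $4$, is orthogonal to the zero-sum vectors above, and is $E$-invariant because the partition is equitable. Writing $E$ in the cell-indicator basis produces the quotient matrix
$$B=\begin{pmatrix} 0 & a & bp & cq\\ a & 0 & cp & bq\\ b & c & b(p-1) & cq\\ c & b & cp & b(q-1)\end{pmatrix},$$
whose eigenvalues are precisely the four remaining eigenvalues of $E$ (they are real, since $B$ is similar, via $\mathrm{diag}(1,1,\sqrt p,\sqrt q)$, to the symmetric matrix of $E|_W$ in an orthonormal basis). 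As $(p+q-2)+4=p+q+2$ equals the order of $E$, this accounts for the whole spectrum.

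Finally I would expand $\det(tI-B)$ and collect terms to obtain the stated quartic. The expansion is the only laborious step, and I expect the bookkeeping of the many mixed coefficients (the $a^2b^2pq$, $abc$, and $c^4pq$ terms, etc.) to be the main obstacle. I would guard against sign and coefficient slips with two cheap checks: the coefficient of $t^{3}$ must equal $-\operatorname{trace}(B)=-bp-bq+2b$, which matches the statement, and since $E$ has zero diagonal the entire spectrum has trace $0$, forcing the four roots to sum to $b(p+q-2)$ in exact cancellation with the $-b$ eigenspace. Specializing to $p=q$ or to $a=b=c$ would give further confirmation of the full polynomial.
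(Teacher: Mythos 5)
Your proposal is correct and follows essentially the same route as the paper: the paper also produces the $-b$ eigenvalue from difference vectors within the $p$-block and $q$-block (spanning your zero-sum spaces), and then reduces the remaining four eigenvalues to the characteristic polynomial of exactly your $4\times 4$ quotient matrix $B$, via the condition $\det(B-t_iI)=0$ on eigenvectors constant on the four cells. Your phrasing through equitable-partition invariance of the complement $W$ (plus the $\mathrm{diag}(1,1,\sqrt{p},\sqrt{q})$ symmetrization and the trace checks) is a slightly cleaner justification of the step the paper handles by counting linearly independent eigenvectors, but the substance is identical.
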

\begin{proof}
Let $e_{i,j}$ be a column vector of size $p+q+2$ with its $i$th and $j$th entries equal to 1 and -1, respectively, and the remaining entries are 0. Then 
$E e_{3,j}=-b e_{3,j}$ for $j=4,5,\ldots,p+2$ and $E e_{p+3,j}=-b e_{p+3,j}$ for $j=p+4,p+5,\ldots,p+q+2$. Thus $-b$ is an eigenvalue of $E$ corresponding to the $p+q-2$ linearly independent eigenvectors $e_{3,j}$ ($j=4,5,\ldots,p+2$) and $e_{p+3,j}$ ($j=p+4,p+5,\ldots,p+q+2$). Thus we have listed $p+q-2$ eigenvalues of $E$. Let $t_{1},t_{2},t_{3}$ and $t_{4}$ be the remaining eigenvalues of $E$ corresponding to the eigenvectors $X_{1}$, $X_{2}$, $X_{3}$ and $X_{4}$, respectively. Let $e_{i}$ be the column vector with its $i$th entry equal to 1 and the rest of the entries equal to 0. Also, let $e_{i}^{j}$ $(i\le j)$ be the column vector with its \textit{k}th entry equal to 1 if $i\le k\le j$, and 0 otherwise. Then the vectors $e_{1}$, $e_{2}$, $e_{3}^{p+2}$, $e_{p+3}^{p+q+2}$, $e_{3,j}$ ($j=4,5,\ldots,p+2$) and $e_{p+3,j}$ ($j=p+4,p+5,\ldots,p+q+2$) form a linearly independent set with $p+q+2$ elements. Since the matrix $E$ is real and symmetric, it has $p+q+2$ linearly independent eigenvectors. Thus the vectors $X_{1}$, $X_{2}$, $X_{3}$ and $X_{4}$ are in the linear span of the vectors $
e_{1}$, $e_{2}$, $e_{3}^{p+2}$ and $e_{p+3}^{p+q+2}$. Let $X_{i}=a_{i}e_{1}+b_{i}e_{2}+c_{i}e_{3}^{p+2}+d_{i}e_{p+3}^{p+q+2}$ for $i=1,2,3,4$. Then $EX_{i}=t_{i} X_{i}$ implies $b_{i}a+c_{i}bp+d_{i}cq=a_{i}t_{i}; a_{i}a+cc_{i}p+bd_{i}q=b_{i}t_{i}; ba_{i}+cb_{i}+b(p-1)c_{i}+cd_{i}q=c_{i}t_{i}; ca_{i}+bb_{i}+cc_{i}p+d_{i}b(q-1)=d_{i}t_{i}$. Thus  $EX_{i}=t_{i} X_{i}$ if and only if $$det\left(\begin{array}{cccc}
-t_{i}&a&bp&cq\\
a&-t_{i}&cp&bq\\
b&c&b(p-1)-t_{i}&cq\\
c&b&cp&b(q-1)-t_{i}
\end{array}\right)=0.$$
Therefore the remaining four eigenvalues of the matrix $E$ are the roots of the equation $t^4+(-bp-bq+2b)t^3+(b^2pq-c^2pq-2b^2p-2b^2q-c^2p-c^2q-a^2+b^2)t^2+(2b^3pq-2bc^2pq+a^2bp+a^2bq-2abcp-2abcq-b^3p-b^3q-bc^2p-bc^2q-2a^2b)t-a^2b^2pq+a^2c^2pq+2pqab^2c-2pqac^3+b^4pq-2pqb^2c^2+c^4pq+a^2b^2p+qa^2b^2-2ab^2cp-2ab^2cq-a^2b^2$.
 \end{proof}
\begin{lemma}\label{dl2}
The reciprocal distance spectrum of $K_{m,n}\backslash\{e\}$ consists of $-1/2$ with multiplicity $m+n-4$ and the four roots of the polynomial
$144\,{t}^{4}+ \left( -72\,m-72\,n+288 \right) {\lambda}^{3}+
\left(  \left( -108\,n-108 \right) m-108\,q+344 \right) {\lambda}^{2}
+ \left(  \left( -108\,n-22 \right) m-22\,n+136 \right) \lambda+
\left( 21\,n-41 \right) m-41\,n+57.$
\end{lemma}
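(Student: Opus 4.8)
The plan is to obtain the reciprocal distance spectrum of $K_{m,n}\backslash\{e\}$ as a direct specialization of Lemma~\ref{ld1}, after verifying that the reciprocal distance matrix of $K_{m,n}$ with one edge removed has exactly the structural form of the matrix $E$. First I would identify the two endpoints of the deleted edge $e$ as two distinguished rows/columns. Deleting one edge from a complete bipartite graph creates a pair of vertices (one in each part) that were formerly adjacent and are now at distance $3$ from each other (the shortest path going endpoint $\to$ its own part's other side $\to\ldots$; more carefully, the distance becomes $3$ since they must route through two intermediate vertices in the two parts), while all other distances remain $1$ (between parts) or $2$ (within a part). So the reciprocal distances take the values $1,\tfrac12,\tfrac13$, and I would set up the ordering of vertices as: the two endpoints of $e$ first, then the remaining $m-1$ vertices of the first part, then the remaining $n-1$ of the second part.

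Second, I would read off the parameters. The two endpoints now have reciprocal distance $1/3$ between them, so in the notation of Lemma~\ref{ld1} we take $a=\tfrac13$. An endpoint is at reciprocal distance $1/2$ to the non-endpoint vertices of its own part and at reciprocal distance $1$ to the non-endpoint vertices of the opposite part; matching these against the blocks $bJ_{1\times p}$, $cJ_{1\times q}$ etc.\ fixes $b=\tfrac12$ and $c=1$ (with $p=m-1$ and $q=n-1$). The remaining diagonal blocks $b(J_{p\times p}-I_p)$ and $b(J_{q\times q}-I_q)$ correctly encode that non-endpoint vertices within the same part sit at reciprocal distance $\tfrac12$, and the off-diagonal block $cJ_{p\times q}$ encodes reciprocal distance $1$ between the two parts. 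Thus $RD(K_{m,n}\backslash\{e\})$ is exactly $E$ with $(a,b,c,p,q)=(\tfrac13,\tfrac12,1,m-1,n-1)$.

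Third, I would invoke Lemma~\ref{ld1} directly. With $b=\tfrac12$, the eigenvalue $-b=-\tfrac12$ appears with multiplicity $p+q-2=(m-1)+(n-1)-2=m+n-4$, which matches the claimed multiplicity. The remaining four eigenvalues are the roots of the quartic of Lemma~\ref{ld1} evaluated at these parameter values; I would substitute $a=\tfrac13$, $b=\tfrac12$, $c=1$, $p=m-1$, $q=n-1$ into that polynomial and clear denominators by multiplying through by $144$ (the least common multiple arising from the $\tfrac13$ and $\tfrac12$ terms, since the quartic's constant and cubic coefficients carry factors up to $a^2b^2=\tfrac{1}{36}\cdot$ and $a^2c^2 = \tfrac19$, etc.), thereby transforming the monic quartic into the stated integer-coefficient quartic $144\,t^4+(-72m-72n+288)t^3+\cdots$.

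The main obstacle is purely computational: the substitution-and-simplification step, matching the fully expanded quartic of Lemma~\ref{ld1} against the displayed polynomial. I expect this to be tedious rather than conceptually hard, and I would organize it coefficient by coefficient (degree $3$, then degree $2$, degree $1$, constant), checking each against the corresponding coefficient in the statement after the $\times 144$ scaling. One point that warrants care is confirming the distance value between the two former-endpoints is indeed $3$ (not $2$), since this is what forces $a=\tfrac13$; I would justify this by noting that after deleting $e$, the unique shortest route between its endpoints passes through the opposite part and back, giving length exactly $3$ whenever $m,n\ge 2$. With the parameters correctly identified, the result follows immediately from Lemma~\ref{ld1}, so no further eigenvalue analysis is needed.
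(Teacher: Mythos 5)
Your proposal is correct and follows exactly the paper's own route: write down $RD(K_{m,n}\backslash\{e\})$ with the two endpoints of the deleted edge listed first (noting their distance becomes $3$), recognize it as the matrix $E$ of Lemma~\ref{ld1} with $(a,b,c,p,q)=(\tfrac13,\tfrac12,1,m-1,n-1)$, and read off the spectrum; the paper's proof is precisely this specialization, stated more tersely. Your added care in verifying the distance is $3$ (no common neighbor survives the deletion) and in checking the multiplicity count $p+q-2=m+n-4$ is a sound elaboration of steps the paper leaves implicit.
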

\begin{proof}
We have the reciprocal distance matrix of $K_{m,n}\backslash\{e\}$ as \\
$$\left[\begin{array}{cccc}
	0&\frac{1}{3}&\frac{1}{2}J_{1\times p}&J_{1\times q}\\
	\frac{1}{3}&0&J_{1\times p }&\frac{1}{2}J_{1\times q}\\
	\frac{1}{2}1_{p}&1_{n}&\frac{1}{2}(J_{p\times p}-I_{p})&J_{p\times q}\\
	1_{q}&\frac{1}{2}1_{q}&J_{q \times p}&\frac{1}{2}(J_{q\times q}-I_{q})			
\end{array}\right],$$ where $p=m-1$ and $q=n-1$. Letting $a=1/3$, $b=1/2$ and $c=1$ in Lemma \ref{ld1}, we get the reciprocal distance spectrum of $K_{m,n}\backslash\{e\}$.  \end{proof}
\begin{theorem}
 We have $\mathcal{E}_{RD}(K_{q,q}\backslash \{e\})<\mathcal{E}_{RD}(K_{q,q})$ if $2\le q\le7 $, and	$\mathcal{E}_{RD}(K_{q,q}\backslash \{e\})>\mathcal{E}_{RD}(K_{q,q})$ if $q\ge8 $.
\end{theorem}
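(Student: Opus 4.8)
The plan is to reduce both sides to explicit closed forms and compare them. First I would evaluate $\mathcal{E}_{RD}(K_{q,q})$. By Corollary~\ref{cor} this equals $2\lambda_{1}(RD(K_{q,q}))$, and the required largest eigenvalue is already computed in \eqref{ieq4} with $n_{s}=n_{s+1}=q$, giving $\lambda_{1}(RD(K_{q,q}))=\tfrac14\big(2q-2+\sqrt{16q^{2}}\big)=\tfrac{3q-1}{2}$. Hence $\mathcal{E}_{RD}(K_{q,q})=3q-1$.

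Next I would apply Lemma~\ref{dl2} with $m=n=q$ (so $p=q-1$ and the second block has size $q-1$): the reciprocal distance spectrum of $K_{q,q}\backslash\{e\}$ is $-\tfrac12$ with multiplicity $2q-4$ together with the four roots $t_{1},t_{2},t_{3},t_{4}$ of the quartic. Since $\operatorname{trace}(RD)=0$ and the only negative eigenvalue outside these four roots is $-\tfrac12$, the energy equals twice the sum of the positive eigenvalues, so $\mathcal{E}_{RD}(K_{q,q}\backslash\{e\})=2S^{+}$, where $S^{+}$ is the sum of the positive roots of the quartic. Counting and summing these is the crucial step. Here I would exploit the automorphism of $K_{q,q}\backslash\{e\}$ that interchanges the two parts and the two endpoints of the deleted edge: passing to the equitable quotient on the cells $\{x_{1}\},\{y_{1}\},X\backslash\{x_{1}\},Y\backslash\{y_{1}\}$ and using this involution block-diagonalizes the $4\times4$ quotient into two $2\times2$ blocks $B_{+}$ (symmetric part) and $B_{-}$ (antisymmetric part). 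A direct determinant computation gives $\det B_{+}<0$ for every $q$, so $B_{+}$ always contributes exactly one positive eigenvalue, whereas $\det B_{-}=\tfrac{3-q}{12}$, so $B_{-}$ contributes a second positive eigenvalue precisely when $q\ge4$. Expressing the positive eigenvalues through the traces and discriminants of the blocks yields, for $q\ge4$,
\[
\mathcal{E}_{RD}(K_{q,q}\backslash\{e\})=(q-2)+\tfrac16\Big(\sqrt{81q^{2}+72q-128}+\sqrt{9q^{2}+24q-32}\Big),
\]
while for $q\in\{2,3\}$ only the $B_{+}$ eigenvalue is positive, giving $\mathcal{E}_{RD}(K_{q,q}\backslash\{e\})=\tfrac16\big(9q-10+\sqrt{81q^{2}+72q-128}\big)$.

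Finally I would carry out the comparison. For $q\ge4$, the inequality $\mathcal{E}_{RD}(K_{q,q}\backslash\{e\})>3q-1$ is equivalent to $\sqrt{81q^{2}+72q-128}+\sqrt{9q^{2}+24q-32}>12q+6$. Squaring once (all quantities are positive on this range) reduces it to $2\sqrt{AB}>54q^{2}+48q+196$, with $A,B$ the two radicands; squaring again reduces it to the single cubic inequality $5184q^{3}-31536q^{2}-40320q-22032>0$. I would show this cubic is negative at $q=4,5,6,7$ and positive for all $q\ge8$ (by checking $q=8$ and controlling the cubic on $[8,\infty)$), which reproduces the stated threshold; the cases $q=2,3$ are then verified directly against $3q-1$ using the simpler closed form.

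The hard part, and the step most prone to error, is the sign count: establishing that a second positive reciprocal-distance eigenvalue already appears at $q=4$ while the energy nonetheless keeps decreasing until $q=8$, and then controlling the reduced cubic for \emph{all} $q\ge8$ rather than at finitely many integers (this needs monotonicity of the cubic on $[8,\infty)$ or an explicit bound on its largest root). Once the block-diagonalization into $B_{+}$ and $B_{-}$ is in place, everything else is routine bookkeeping.
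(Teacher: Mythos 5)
Your proposal is correct and takes essentially the same route as the paper: both extract the four non-trivial eigenvalues of $RD(K_{q,q}\backslash\{e\})$ as two quadratic pairs (your blocks $B_{+},B_{-}$ reproduce exactly the paper's explicit roots $\tfrac34 q-\tfrac56\pm\tfrac{1}{12}\sqrt{81q^{2}+72q-128}$ and $-\tfrac14 q-\tfrac16\pm\tfrac{1}{12}\sqrt{9q^{2}+24q-32}$, with $\det B_{-}=\tfrac{3-q}{12}$ matching the sign change of the third root), write the energy in closed form, and reduce the comparison with $\mathcal{E}_{RD}(K_{q,q})=3q-1$ to the sign of a cubic after squaring twice. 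Indeed your cubic $5184q^{3}-31536q^{2}-40320q-22032$ is exactly $20736$ times the paper's $\tfrac14 q^{3}-\tfrac{73}{48}q^{2}-\tfrac{35}{18}q-\tfrac{17}{16}$, and your sign analysis (negative for $q\le 7$, positive for $q\ge 8$ via monotonicity) is the same threshold argument, so the two proofs differ only in bookkeeping.
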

\begin{proof}
From Lemma \ref{dl2}, the reciprocal distance spectrum of $K_{q,q}\backslash \{e\}$ consists of $-1/2$ with multiplicity $2q-4$; 
$\frac{3}{4}\,q-\frac{5}{6}+\frac{1}{12}\,\sqrt {81\,{q}^{2}+72\,q-128}$; 
$\frac{3}{4}\,q-\frac{5}{6}-\frac{1}{12}\,\sqrt {81\,{q}^{2}+72\,q-128}$; $-\frac{1}{4}\,q-\frac{1}{6}+\frac{1}{12}\,\sqrt {9\,{q}^{2}+24\,q-32}$; $-\frac{1}{4}\,q-\frac{1}{6}-\frac{1}{12}\,\sqrt {9\,{q}^{2}+24\,q-32}$. Thus for $q=2$, $\mathcal{E}_{RD}(K_{q,q}\backslash \{e\})=\dfrac{4}{3}+\dfrac{1}{3}\sqrt{85}<\mathcal{E}_{RD}(K_{q,q})=5$, and for $q\ge3$, we have   $$\mathcal{E}_{RD}(K_{q,q}\backslash \{e\})=2\left(\frac{1}{2}\,q-1+\frac{1}{12}\,\sqrt {81\,{q}^{2}+72\,q-128}+\frac{1}{12}\,\sqrt {9\,{q}^{2}+24\,q-32}\right).$$
Let $X= \frac{1}{12}\,\sqrt {81\,{q}^{2}+72\,q-128}$ and $Y=\frac{1}{12}\,\sqrt {9\,{q}^{2}+24\,q-32}$. Then \begin{eqnarray*}
(2XY)^2-\left[\left(q+\frac{1}{2}\right)^2-(X^2+Y^2)\right]^2=\frac{1}{4}q^3-\frac{73}{48}q^2-\frac{35}{18}q-\frac{17}{16}.
\end{eqnarray*}
If $2\le q\le 7$. Then from the above equation, we get
\begin{eqnarray*}
(2XY)^2-\left[\left(q+\frac{1}{2}\right)^2-(X^2+Y^2)\right]^2<0.	
\end{eqnarray*}
Thus
\begin{eqnarray*}
	(X+Y)^2<\left(q+\frac{1}{2}\right)^2.	
\end{eqnarray*}
Therefore, $X+Y<q+\frac{1}{2}$ or  $\frac{1}{2}\,q-1+X+Y<\frac{3}{2}q-\frac{1}{2}$. Hence $\mathcal{E}_{RD}(K_{q,q}\backslash \{e\})<\mathcal{E}_{RD}(K_{q,q})$\\
If $q\ge 8$. Then we have 
\begin{eqnarray*}
(2XY)^2-\left[\left(q+\frac{1}{2}\right)^2-(X^2+Y^2)\right]^2&=&\frac{1}{4}q^3-\frac{73}{48}q^2-\frac{35}{18}q-\frac{17}{16}\\
&\ge&2q^2-\frac{73}{48}q^2-\frac{35}{18}q-\frac{17}{16}\\
&=&\frac{23}{48}q^2-\frac{35}{18}q-\frac{17}{16}\\
&\ge&\frac{17}{9}q-\frac{17}{16}\\
&>&0.
\end{eqnarray*}
\begin{eqnarray*}
	(X+Y)^2>\left(q+\frac{1}{2}\right)^2.	
\end{eqnarray*}
Therefore, $X+Y>q+\frac{1}{2}$ or  $\frac{1}{2}\,q-1+X+Y>\frac{3}{2}q-\frac{1}{2}$. Hence $\mathcal{E}_{RD}(K_{q,q}\backslash \{e\})>\mathcal{E}_{RD}(K_{q,q})$. This completes the proof.
\end{proof}
\begin{theorem}
We have $\mathcal{E}_{RD}(K_{p,q,r}\backslash \{e\})\le\mathcal{E}_{RD}(K_{p,q,r})$. 	
\end{theorem}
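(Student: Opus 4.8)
The plan is to reduce the whole statement to the behaviour of the \emph{largest} eigenvalue, exactly in the spirit of Corollary \ref{cor}. Write $G=K_{p,q,r}$ with parts $A,B,C$, and let $e=uv$ with $u\in A,\ v\in B$. Since the third part $C$ is nonempty, $u$ and $v$ keep a common neighbour after $e$ is removed, so $G\setminus\{e\}$ still has diameter $2$; hence its reciprocal distance matrix agrees with $RD(G)$ except that the two symmetric entries indexed by $u,v$ drop from $1$ to $\tfrac12$. Thus $RD(G\setminus\{e\})=RD(G)-\tfrac12\bigl(e_ue_v^{T}+e_ve_u^{T}\bigr)$, a rank-two perturbation whose perturbing term has eigenvalues $\tfrac12,-\tfrac12$ and zeros. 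Two facts are then immediate: by Corollary \ref{cor}, $\mathcal{E}_{RD}(G)=2\lambda_{1}(RD(G))$; and $0\le RD(G\setminus\{e\})\le RD(G)$ entrywise with both matrices nonnegative, so Lemma \ref{cl} gives $\lambda_{1}(RD(G\setminus\{e\}))\le\lambda_{1}(RD(G))$.

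The heart of the argument is to show that $RD(G\setminus\{e\})$ also has exactly one positive eigenvalue, for then (its trace being $0$) one gets $\mathcal{E}_{RD}(G\setminus\{e\})=2\lambda_{1}(RD(G\setminus\{e\}))$, and the theorem follows by combining the two facts above. Being the reciprocal distance matrix of a connected graph, $RD(G\setminus\{e\})$ is nonnegative and irreducible, so $\lambda_{1}>0$ by the Perron--Frobenius theorem; it remains to prove $\lambda_{2}\le 0$. Applying Lemma \ref{l2}(ii) with $i=2,\ j=1$ to the decomposition $RD(G\setminus\{e\})=RD(G)+\bigl(-\tfrac12(e_ue_v^{T}+e_ve_u^{T})\bigr)$ yields $\lambda_{2}(RD(G\setminus\{e\}))\le\lambda_{2}(RD(G))+\tfrac12$, so it suffices to establish the inertia bound $\lambda_{2}(RD(G))\le-\tfrac12$.

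For that bound I would exploit diameter $2$ once more. Since $RD(G)=\tfrac12(J_{n}-I)+\tfrac12 A(G)$, we have $2\,RD(G)+I=J_{n}+A(G)$; and because $A(G)=J_{n}-\bigoplus_{i}J_{n_{i}}$ (with $\bigoplus_{i}J_{n_{i}}$ the block-diagonal matrix of the all-ones blocks on each part), this equals $2J_{n}-\bigoplus_{i}J_{n_{i}}$. That is a rank-one positive semidefinite update $2J_{n}=(\sqrt2\,\mathbf{1})(\sqrt2\,\mathbf{1})^{T}$ of the negative semidefinite matrix $-\bigoplus_{i}J_{n_{i}}$. By the interlacing inequality for rank-one updates, $\lambda_{2}\bigl(2\,RD(G)+I\bigr)\le\lambda_{1}\bigl(-\bigoplus_{i}J_{n_{i}}\bigr)=0$, i.e.\ $2\lambda_{2}(RD(G))+1\le0$, which is precisely $\lambda_{2}(RD(G))\le-\tfrac12$.

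I expect the main obstacle to be exactly this inertia estimate $\lambda_{2}(RD(G))\le-\tfrac12$: the information already available from Corollary \ref{cor} gives only $\lambda_{2}(RD(G))\le 0$, which is too weak to survive the $+\tfrac12$ shift produced by the rank-two perturbation. The $J+A$ reformulation above is the clean way past it; alternatively one may invoke the reduced matrix $M''$ from the proof of Theorem \ref{thm1}, noting that $M''+\tfrac12 I$ is congruent to $J_{3}-\tfrac12 I_{3}$, whose inertia is $(1,0,2)$, so that the two smaller eigenvalues of $M''$ lie below $-\tfrac12$. Once $\lambda_{2}(RD(G))\le-\tfrac12$ is secured, the remaining steps are a short assembly, and the resulting inequality is in fact strict except in the borderline case where a zero eigenvalue appears at the threshold.
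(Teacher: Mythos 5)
Your proof is correct, and while its final assembly coincides with the paper's (energy equals $2\lambda_{1}$ for both graphs, combined with Corollary \ref{cor} and the entrywise monotonicity of Lemma \ref{cl}), the key technical step is handled by a genuinely different method. Both arguments hinge on showing that $RD(K_{p,q,r}\setminus\{e\})$ has exactly one positive eigenvalue, so that trace zero gives $\mathcal{E}_{RD}(K_{p,q,r}\setminus\{e\})=2\lambda_{1}(RD(K_{p,q,r}\setminus\{e\}))$. The paper obtains this by explicitly computing the reciprocal distance spectrum of $K_{p,q,r}\setminus\{e\}$ in three cases ($p,q\ge2$; $p=q=1$; $p\ge2$, $q=1$), in the manner of Lemma \ref{ld1}, and then applying Descartes' rule of signs to the residual polynomial. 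You avoid all of that: you first prove the inertia bound $\lambda_{2}(RD(K_{p,q,r}))\le-\tfrac12$ from the identity $2\,RD+I=2J_{n}-\bigoplus_{i}J_{n_{i}}$ together with Weyl's inequality (Lemma \ref{l2}(ii)), and then observe that deleting the edge is a rank-two perturbation whose largest eigenvalue is $\tfrac12$, so it can raise $\lambda_{2}$ by at most $\tfrac12$; this correctly uses that the third part is nonempty, so the diameter stays $2$ and the perturbation only changes the two entries indexed by the deleted edge. What each approach buys: yours is computation-free, needs no case analysis, and generalizes verbatim to edge deletion in any complete $k$-partite graph with $k\ge3$ (all that is needed is one part disjoint from the deleted edge), whereas the paper's computation additionally yields the explicit spectrum of $K_{p,q,r}\setminus\{e\}$, information your argument does not provide. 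Two minor remarks: your parenthetical alternative via the matrix $M''$ of Theorem \ref{thm1} is also sound, since $M''+\tfrac12 I_{3}=C\bigl(J_{3}-\tfrac12 I_{3}\bigr)C$ with $C=\mathrm{diag}(\sqrt{n_{1}},\sqrt{n_{2}},\sqrt{n_{3}})$ (the diagonal matrix $D$ there equals $\tfrac12 I_{3}-\tfrac12 C^{-2}$, so adding $\tfrac12 I_{3}$ is compatible with the congruence); and your closing sentence is too cautious --- the inequality is in fact always strict, because deleting an edge strictly decreases an entry of the irreducible nonnegative matrix $RD(K_{p,q,r})$ and hence strictly decreases its Perron root.
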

\begin{proof}
Case I: If $p,q\ge 2$. Then similar to Lemma \ref{ld1}, the reciprocal distance spectrum of $K_{p,q,r}\backslash \{e\}$ consists of $-1/2$ with multiplicity $p+q+r-5$, and the five roots of the polynomial $p(t)=32\,{t}^{5}+ \left( -16\,p-16\,q-16\,r+80 \right) {t}^{4}+\left(  \left( -24\,q-24\,r-32 \right) p+ \left( -24\,r-32 \right) q-
32\,r+104 \right) {t}^{3}+ \left[  \left(  \left( -20\,r-36
\right) q-36\,r-\right.\right.$\\$\left.\left.20 \right) p+ \left( -36\,r-20 \right) q-4\,r+68
\right] {t}^{2}+ \left[  \left(  \left( -20\,r-12 \right) q-12
\,r-10 \right) p+26+ \left( -12\,r\right.\right.$\\$\left.\left.-10 \right) q \right] t+
\left( -5\,r-3 \right) p+ \left( -5\,r-3 \right) q+3\,r+5$.\\
Case II: If p=q=1. Then the reciprocal distance spectrum  of $K_{p,q,r}\backslash \{e\}$ consists of $-1/2$ with multiplicity $r-1$, and the three roots of the polynomial $p(t)=8\,{\lambda}^{3}+ \left( 4-4\,r \right) {\lambda}^{2}+ \left( -16\,r-2
\right) \lambda-7\,r-1$.\\
Case III: If $p\ge2$ and $q=1$. Then the reciprocal distance spectrum  of $K_{p,q,r}\backslash \{e\}$ consists of $-1/2$ with multiplicity $p+r-3$, and the three roots of the polynomial $p(t)=16\,{t}^{4}+ \left( -8\,p-8\,r+24 \right) {t}^{3}+ \left( 
\left( -12\,r-24 \right) p-24\,r+24 \right) {t}^{2}+ \left( 
\left( -22\,r-16 \right) p\right.$\\$\left.-8\,r+12 \right) t+ \left( -5\,r-3
\right) p-2\,r+2.$\\
By Descarte's rule of signs, the polynomial $p(t)$ has exactly one positive root. Thus $\mathcal{E}_{RD}(K_{p,q,r}\backslash \{e\})=2\lambda_{1}(RD(K_{p,q,r}\backslash \{e\}))$. Since $\mathcal{E}_{RD}(K_{p,q,r})=2\lambda_{1}(RD(K_{p,q,r}))$ and $\lambda_{1}(RD(K_{p,q,r}\backslash \{e\}))\le \lambda_{1}(RD(K_{p,q,r}))$ by Lemma \ref{cl}, we are done. 	
\end{proof}

\end{document}